\newtheorem{Theorem}{Theorem}[section]
\newtheorem{Lemma}[Theorem]{Lemma}
\newtheorem{Proposition}[Theorem]{Proposition}
\newtheorem{Definition}[Theorem]{Definition}
\newtheorem{Remark}[Theorem]{Remark}
\def\C {\mathbb C}
\def\H {\mathbb H}
\def\R {\mathbb R}
\newcommand{\<}{\langle}
\renewcommand{\>}{\rangle}
\renewcommand{\(}{\left(}
\renewcommand{\)}{\right)}
\renewcommand{\Re}{\operatorname{Re}}
\renewcommand{\Im}{\operatorname{Im}}
\newcommand{\lesim}{\lesssim}
\renewcommand{\div}{\operatorname{div}}
\newcommand{\de}[2]{\frac{\partial #1}{\partial #2}}
\newcommand{\p}{\partial}
\newcommand{\Vol}{\operatorname{Vol}}
\begin{document}
\title[Polyharmonic operator on admissible manifolds]{Determining the first order perturbation of a polyharmonic operator on admissible manifolds}

\author[Yernat M. Assylbekov]{Yernat M. Assylbekov}
\address{Department of Mathematics, University of Washington, Seattle, WA 98195-4350, USA}
\email{y\_assylbekov@yahoo.com}

\author[Yang Yang]{Yang Yang}
\address{Department of Mathematics, Purdue University, West Lafayette, IN 47907, USA}
\email{yang926@purdue.edu}

\begin{abstract}
We consider the inverse boundary value problem for the first order perturbation of the polyharmonic operator $\mathcal L_{g,X,q}$, with $X$ being a $W^{1,\infty}$ vector field and $q$ being an $L^\infty$ function on compact Riemannian manifolds with boundary which are conformally embedded in a product of the Euclidean line and a simple manifold. We show that the knowledge of the Dirichlet-to-Neumann determines $X$ and $q$ uniquely. The method is based on the construction of complex geometrical optics solutions using the Carleman estimate for the Laplace-Beltrami operator due to Dos Santos Ferreira, Kenig, Salo and Uhlmann. Notice that the corresponding uniqueness result does not hold for the first order perturbation of the Laplace-Beltrami operator.
\end{abstract}

\maketitle

\section{Introduction}
Let $(M,g)$ be a compact oriented Riemannian smooth manifold with boundary. Throughout this paper, the word ``smooth" will be used as the synonym of ``$C^\infty$". Let $\Delta_g$ be the Laplace-Beltrami operator associated to the metric $g$ which is given in local coordinates by
$$
\Delta_g u=|g|^{-1/2}\de{}{x^j}\(|g|^{1/2} g^{jk}\de{u}{x^k}\),
$$
where as usual $(g^{jk})$ is the matrix inverse of $(g_{jk})$, and $|g|=\det(g_{jk})$. If $F$ denotes a function or distribution space ($C^k$, $L^p$ , $H^k$, $\mathcal D'$, etc.), then we will denote by $F(M,TM)$ the corresponding space of vector fields on $M$.

Let $X\in W^{1,\infty}(M,TM)$ and $q\in L^\infty(M)$. Consider the polyharmonic operator $(-\Delta_g)^m$, $m\ge 1$, with the first order perturbation induced by $X$ and $q$
$$
\mathcal{L}_{g,X,q}=(-\Delta_g)^m+X+q
$$
The operator $\mathcal L_{g,X,q}$ equipped with the domain
$$
\mathcal{D}(\mathcal L_{g,X,q})= \{u\in H^{2m}(M): \gamma u=0\}=H^{2m}(M)\cap H^m_0(M)
$$
is an unbounded closed operator on $L^2(M)$ with purely discrete spectrum; see \cite{Grubb}. Here and in what follows,
$$
\gamma u:=(u|_{\p M},\Delta_g u|_{\p M},\dots,\Delta_g^{m-1} u|_{\p M})
$$
is the Dirichlet trace of $u$, and $H^s(M)$ is the standard Sobolev space on $M$, $s\in\R$.
\\

We make the assumption that $0$ is not a Dirichlet eigenvalue of $\mathcal L_{g,X,q}$ in $M$. Under this assumption, for any $f=(f_0,\dots,f_{m-1})\in \mathcal H_m(\p M):=\prod_{j=0}^{m-1}H^{2m-2j-1/2}(\p M)$, the Dirichlet problem
\begin{equation}\label{Dirichlet problem}
\begin{aligned}
\mathcal L_{g,X,q}u&=0\quad\text{in}\quad M,\\
\gamma u&=f\quad\text{in}\quad\p M,
\end{aligned}
\end{equation}
has a unique solution $u\in H^{2m}(M)$. Let $\nu$ be an outer unit normal to $\p M$. Introducing the Neumann trace operator $\widetilde{\gamma}$ by
\begin{align*}
&\widetilde{\gamma}: H^{2m}(M)\to \prod_{j=0}^{m-1}H^{2m-2j-3/2}(\p M),\\
&\widetilde{\gamma}u=(\p_\nu u|_{\p M},\p_\nu \Delta_g u|_{\p M},\dots,\p_\nu \Delta_g^{m-1} u|_{\p M}),
\end{align*}
we define the Dirichlet-to-Neumann map $N_{g,X,q}$ by
$$
N_{g,X,q}:\mathcal H_m(\p M)\to \prod_{j=0}^{m-1}H^{2m-2j-3/2}(\p M),\quad
N_{g,X,q}(f)=\widetilde{\gamma}u,
$$
where $u\in H^{2m}(M)$ is the unique solution to the boundary value problem \eqref{Dirichlet problem}. Let us also introduce the set of the Cauchy data for the operator $\mathcal L_{g,X,q}$
$$
C_{g,X,q}=\{(\gamma u,\widetilde{\gamma}u):u\in H^{2m}(M),\quad \mathcal L_{g,X,q}u=0\}.
$$
When $0$ is not a Dirichlet eigenvalue of $\mathcal L_{g,X,q}$ in $M$, the set $C_{g,X,q}$ is the graph of the Dirichlet-to-Neumann map $N_{g,X,q}$.\\

The inverse problem we are concerned in this paper is to recover the vector field $X$ and the function $q$ from the knowledge of the Dirichlet-to-Neumann map $N_{g,X,q}$ on the boundary $\p M$.

When $m=1$, the Dirichlet-to-Neumann map $N_{g,X,q}$ is invariant under gauge transformations in the following sense. Let $\psi$ be a $C^2(M)$ such that $\psi|_{\p M}=0$ and $\p_\nu \psi|_{\p M}=0$. Then
$$
e^{-i\psi}\mathcal L_{g,X,q}e^{i\psi}=\mathcal L_{g,\widetilde X,\widetilde q},\quad N_{g,\widetilde X,\widetilde q}=N_{g,X,q},
$$
where
$$
\widetilde X=X+2\nabla \psi,\quad \widetilde q=q+\<X,\nabla \psi\>_g+|\nabla \psi|_g^2-i\Delta_g \psi.
$$
Therefore, we may hope to recover $X$ and $q$ from boundary measurements only modulo the above gauge transformations.

In the Euclidean setting, this inverse boundary value problem has been extensively studied, usually in the context of magnetic Schr\"odinger operators \cite{KU,NSU,S,Sun,Tol}. In the case of Riemannian manifolds, this was proved in \cite{DKSU} for the special class of so-called admissible manifolds.\\

Let us now introduce admissible manifolds. For this we need the notion of simple manifolds \cite{Shar}. The notion of simplicity arises naturally in the context of the boundary rigidity problem \cite{Michel}.

\begin{Definition}{\rm
A compact Riemannian manifold $(M,g)$ with boundary is said to be \emph{simple} if the boundary $\p M$ is strictly convex, and for any point $x\in M$ the exponential map $\exp_x$ is a diffeomorphism from its maximal domain in $T_x M$ onto $M$.
}\end{Definition}

\begin{Definition}{\rm
A compact Riemannian manifold $(M,g)$ with boundary of dimension $n\ge 3$, is said to be \emph{admissible} if it is conformal to a submanifold with boundary of $\R\times (M_0,g_0)$ where $(M_0,g_0)$ is a simple $(n-1)$-dimensional manifold.
}\end{Definition}

Examples of admissible manifolds include the following:

1. Bounded domains in Euclidean space, in the sphere minus a point, or in hyperbolic space. In the last two cases, the manifold is conformal to a domain in Euclidean space via stereographic
projection.

2. More generally, any domain in a locally conformally flat manifold is admissible, provided that the domain is appropriately small. Such manifolds include locally symmetric $3$-dimensional
spaces, which have parallel curvature tensor so their Cotton tensor vanishes (see the \cite[Appendix~B]{DKSU}).

3. Any bounded domain $M$ in $\R^n$, endowed with a metric which in some coordinates has the form
$$
g(x_1,x')=c(x)\(\begin{matrix}
1&0\\
0&g_0(x')
\end{matrix}\),
$$
with $c>0$ and $g_0$ simple, is admissible.

4. The class of admissible metrics is stable under $C^2$-small perturbations of $g_0$.
\medskip

It was shown in \cite{KLU} that, in the Euclidean case, the obstruction to uniqueness coming from the gauge equivalence when $m=1$ can be eliminated by considering operators of higher order. The purpose of this paper is to extend this result for the case of admissible manifolds.
\medskip
%Let $(M,g)$ be an admissible manifold. Since the transversal manifold $(M_0,g_0)$ is simple, there is a global chart $(O,\phi)$ on it which will be fixed. This gives a global chart on $\R\times M_0$, that is $(\R\times O,\id\times\phi)$. %We say that a function $f:M\to\R$ is {\it $\sigma$-H\"older continuous} on $M$, $\sigma\in (0,1)$, with respect to the global chart $(\R\times O,\id\times\phi)$, if $f\circ(\id\times\phi)^{-1}:(\id\times\phi)(M)\to\R$ is $\sigma$-H\"older continuous on $(\id\times\phi)(M)\subset \R^n$. A function $f:M\to\R$ is said to be $C^{k+\sigma}$ on $M$, $k>0$ integer, with respect to the global chart $(\R\times O,\id\times\phi)$, if $f\in C^k(M)$ and all $k$-th order partial derivatives of $f\circ(\id\times\phi)^{-1}:(\id\times\phi)(M)\to\R$ is $\sigma$-H\"older continuous on $(\id\times\phi)(M)\subset \R^n$.
%For $\sigma\in(0,1)$, we will make the following assumption:\\
%\noindent{\bf $\sigma$-A\textbf{•}ssumption.}
%$X$ is a $C^1$ vector field on $M$ such that all first order partial derivatives of components of $X\circ(\id\times\phi)^{-1}:(\id\times\phi)(M)\to\R^n$ are $\sigma$-H\"older continuous on $(\id\times\phi)(M)\subset \R^n$.
%\medskip

Our main result is as follows.
\begin{Theorem}\label{main th}
Let $(M,g)$ be admissible, and let $m\ge 2$ be an integer. %Let $(\R\times O,\id\times\phi)$ be a global chart on $\R\times M_0$ induced by a global chart $(O,\phi)$ on the transversal simple manifold $(M_0,g_0)$.
Suppose that $X_1,X_2\in W^{1,\infty}(\R\times M_0,T(\R\times M_0))\cap \mathcal E'(M,TM)$ %satisfying $\sigma$-Assumption, $\sigma\in(0,1)$,
and $q_1,q_2\in L^\infty(M)$ are such that $0$ is not a Dirichlet eigenvalue of $\mathcal L_{g,X_1,q_1}$ and $\mathcal L_{g,X_2,q_2}$ in $M$. If $N_{g,X_1,q_1}=N_{g,X_2,q_2}$, then $X_1=X_2$ and $q_1=q_2$.
\end{Theorem}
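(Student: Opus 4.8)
The plan is to convert the equality of Dirichlet-to-Neumann maps into an orthogonality relation and then probe that relation with complex geometrical optics (CGO) solutions whose construction exploits the hypothesis $m\ge2$. First I would record the integral identity. Since $N_{g,X_1,q_1}=N_{g,X_2,q_2}$, the Cauchy data sets $C_{g,X_1,q_1}$ and $C_{g,X_2,q_2}$ coincide, so applying the Green formula for the polyharmonic operator to a solution $u_1$ of $\mathcal L_{g,X_1,q_1}u_1=0$ and a solution $u_2$ of the formal adjoint $\mathcal L_{g,X_2,q_2}^{*}u_2=0$ makes all boundary contributions drop out, leaving
$$\int_M\big[\<X_1-X_2,\nabla u_1\>_g+(q_1-q_2)u_1\big]\,\overline{u_2}\,dV_g=0$$
for every such pair $u_1,u_2$. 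Because $X_1-X_2$ and $q_1-q_2$ are compactly supported in the interior, I may extend them by zero to the cylinder $\R\times M_0$ and work there.

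Next I would build the CGO solutions. Using conformal invariance I reduce to the product metric $g=c\,(dx_1^2\oplus g_0)$, the polyharmonic operator acquiring only harmless lower-order terms. With the limiting Carleman weight $\varphi=x_1$, the iterated form of the Dos Santos Ferreira--Kenig--Salo--Uhlmann estimate, obtained by applying the second-order estimate $m$ times through $e^{\varphi/h}(-h^2\Delta_g)^m e^{-\varphi/h}=\big(e^{\varphi/h}(-h^2\Delta_g)e^{-\varphi/h}\big)^m$, yields solvability and $o(1)$ control of remainders. I then seek
$$u_1=e^{-\tau(\varphi+i\psi)}(a_1+r_1),\qquad u_2=e^{\tau(\varphi-i\psi)}(a_2+r_2),$$
chosen so that the exponentials cancel in $u_1\overline{u_2}$. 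Here $\psi=\psi(x')$ solves the eikonal equation on $M_0$ and is built from the distance function along geodesics of the simple manifold $M_0$ (Gaussian beams), while the leading amplitudes $a_j$ solve the attendant transport equations. The decisive feature for $m\ge2$ is the extra flexibility: since $(-\Delta_g)^m$ annihilates products of a harmonic factor with a polynomial of degree $\le m-1$ in $x_1$, I may also use amplitudes carrying a factor linear in $x_1$, producing a second, independent family of CGO solutions.

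Then I would recover $X:=X_1-X_2$. Inserting a CGO pair into the identity and expanding in $\tau$, the leading $O(\tau)$ term is $-\tau\int_M\<X,\nabla(\varphi+i\psi)\>_g\,a_1\overline{a_2}\,dV_g$. Concentration of the Gaussian beams on a geodesic of $M_0$, combined with a Fourier transform in $x_1$ introduced through an oscillatory factor $e^{i\lambda x_1}$, turns the vanishing of this term into the statement that a geodesic ray transform on $M_0$ of the $x_1$-Fourier components of $X$ vanishes. As $M_0$ is simple this transform is injective on functions and determines one-forms only up to the differential of a function vanishing on $\p M_0$, so the standard family pins $X$ down merely modulo the gauge present at $m=1$. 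To remove it I bring in the linear-amplitude family from the previous step: the additional relations it furnishes detect exactly the potential part $d\phi$, and together with the solenoidal information they force $X_1=X_2$. This gauge-elimination, available only because $m\ge2$, is the heart of the argument.

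Finally, once $X_1=X_2$ the identity collapses to $\int_M(q_1-q_2)u_1\overline{u_2}\,dV_g=0$, whose leading contribution is now $O(1)$ and reproduces the geodesic ray transform of the $x_1$-Fourier components of $q_1-q_2$; injectivity on the simple manifold $M_0$ gives $q_1=q_2$. I expect the main obstacle to be twofold and to reside in the CGO step: first, establishing the remainder estimates for the order-$2m$ conjugated operator by iterating the second-order Carleman estimate while retaining the linear-in-$x_1$ amplitudes; and second, showing rigorously that the two families of CGO solutions together eliminate the gauge, that is, that the data available only for $m\ge2$ recover the full one-form $X$ rather than just its solenoidal part.
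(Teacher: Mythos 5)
Your proposal follows essentially the same route as the paper: the Green-formula integral identity, CGO solutions built from the iterated Dos Santos Ferreira--Kenig--Salo--Uhlmann Carleman estimate with weight $\varphi=x_1$, the extra amplitude freedom available only for $m\ge 2$, reduction to ray transforms of the $x_1$-Fourier components on the transversal simple manifold, and the two-step recovery (first $dX^\flat=0$, then elimination of the gauge $\phi$, then $q$). However, two of your steps, as stated, would fail. The opening move ``using conformal invariance I reduce to the product metric'' is not available when $m\ge 2$: unlike the Laplace--Beltrami operator, $(-\Delta_{cg})^m$ has no conformal transformation law producing $(-\Delta_g)^m$ plus a first-order term; composing $\Delta_{cg}=c^{-1}\Delta_g+(\text{first order})$ with itself $m$ times creates error terms of order up to $2m-1$, which leave the class $(-\Delta_g)^m+X+q$ for which your integral identity and CGO ansatz are designed. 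The paper instead keeps the conformal factor $c$ throughout, so the eikonal equation is $|\nabla\psi|^2=1/c$ and the transport equation is $\bigl(\tfrac{4}{c}\overline{\partial}+\cdots\bigr)^m a=0$. This is fatal to your specific gauge-killing amplitude: a factor linear in $x_1$ gives $\overline{\partial}a_0=\tfrac12$, and then $\tfrac{1}{c}\overline{\partial}a_0=\tfrac{1}{2c}$ is in general not holomorphic, so this amplitude does not solve the transport equation unless $c$ is constant. The paper's substitute is an amplitude with $\overline{\partial}a_0=ca_1$, $\overline{\partial}a_1=0$ (concretely $\overline{\partial}a_0=c$, built by the Cauchy integral formula); the integration by parts then yields the transform of $\phi c$ rather than of $\phi$, which works equally well.

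Second, the transform you obtain is not the plain geodesic ray transform: the factor $e^{i\lambda(x_1+ir)}$ contributes $e^{-\lambda r}$, so what vanishes is the \emph{attenuated} ray transform $T_\lambda$ of the pair (function, one-form) with constant attenuation $-\lambda$, and with data that are only $L^\infty$, respectively $W^{1,\infty}$. The statement you invoke (``injective on functions, one-forms determined up to $dp$'') is the $\lambda=0$ statement; for small $\lambda\neq 0$ the correct conclusion is the \emph{coupled} one, $f=-\lambda p$, $\alpha=dp$ with $p|_{\partial D}=0$, and this coupling is exactly what produces the cancellation $\partial_j f+i\lambda\alpha_j=0$, i.e.\ $dX^\flat=0$; with your uncoupled version this step breaks. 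Moreover, the known injectivity result requires smooth $f,\alpha$, whereas here they are merely bounded; the paper devotes a whole section to removing this obstruction by duality and ellipticity of the normal operator $T_\lambda^*T_\lambda$. Finally, a smaller point: $q_1-q_2$ is not assumed compactly supported, so ``extend by zero and work on the cylinder'' must be replaced by the Cauchy-data extension lemma for a slightly larger manifold $M_1$ (with $q_j$ extended by zero individually), which is what justifies using the integral identity with CGO solutions defined on $M_1$.
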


The key ingredient in the proof of Theorem \ref{main th} is the construction of complex geometric optics solutions for the operator $\mathcal L_{g,X,q}$ with $X$ being a $W^{1,\infty}$ vector field and $q$ an $L^\infty(M)$ function. For this, we use the method of Carleman estimates which is based on the corresponding Carleman estimate for the Laplacian due to Dos Santos Ferreira, Kenig, Salo and Uhlmann \cite{DKSU}.\\

In Theorem \ref{main th}, the condition that $X_1=X_2=0$ on $\partial M$ is needed to extend the vector fields $X_1$ and $X_2$ to a slightly larger simple manifold than $M$ while preserving the $W^{1,\infty}$ regularities. When more regularities on $X_j$ and $q_j$ ($j=1,2$) are available, we can show a boundary determination result for the vector fields and thus drop such an assumption. This is the following theorem.
\begin{Theorem}\label{main th2}
Let $(M,g)$ be admissible, and let $m\ge 2$ be an integer. %Let $(\R\times O,\id\times\phi)$ be a global chart on $\R\times M_0$ induced by a global chart $(O,\phi)$ on the transversal simple manifold $(M_0,g_0)$.
Suppose that $X_1,X_2\in C^{\infty}(M,TM)$ %satisfying $\sigma$-Assumption, $\sigma\in(0,1)$,
and $q_1,q_2\in C^\infty(M)$ are such that $0$ is not a Dirichlet eigenvalue of $\mathcal L_{g,X_1,q_1}$ and $\mathcal L_{g,X_2,q_2}$ in $M$. If $N_{g,X_1,q_1}=N_{g,X_2,q_2}$, then $X_1=X_2$ and $q_1=q_2$.
\end{Theorem}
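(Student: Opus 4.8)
The plan is to reduce Theorem \ref{main th2} to Theorem \ref{main th} by combining two ingredients: a \emph{boundary determination} result that recovers the full jet of the coefficients on $\p M$ from the Dirichlet-to-Neumann map, followed by the interior uniqueness already established in Theorem \ref{main th}. The key observation is that the only extra hypothesis in Theorem \ref{main th} is the vanishing $X_1=X_2=0$ on $\p M$ (needed to extend the vector fields to a larger simple manifold while preserving $W^{1,\infty}$ regularity). Under the stronger $C^\infty$ regularity of Theorem \ref{main th2}, I would first show that $N_{g,X_1,q_1}=N_{g,X_2,q_2}$ forces $X_1|_{\p M}=X_2|_{\p M}$, and then apply a gauge transformation supported near the boundary to reduce to the situation where both vector fields vanish on $\p M$, at which point Theorem \ref{main th} applies directly.

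First I would establish the boundary determination of the vector field. The standard technique is to test the Dirichlet-to-Neumann map against highly oscillatory boundary data concentrated near a point $x_0\in\p M$. Concretely, one constructs approximate solutions of the form $u \approx e^{i\lambda\varphi}a$ with a phase $\varphi$ whose gradient at $x_0$ is a prescribed tangential covector and with $a$ a cutoff localizing to a shrinking neighborhood of $x_0$; feeding these into the bilinear identity obtained from $N_{g,X_1,q_1}=N_{g,X_2,q_2}$ and extracting the leading order in $\lambda$ isolates the boundary value of the first-order coefficient. Because the principal part $(-\Delta_g)^m$ and the zeroth order term $q$ contribute at different (lower) orders in $\lambda$ than the first order term $X$, the top-order asymptotics see only the normal and tangential components of $X$ at $x_0$. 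This yields $X_1=X_2$ on $\p M$. The smoothness assumption is what makes these WKB-type expansions and the resulting asymptotic expansion rigorous and lets us read off pointwise boundary values.

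Next I would exploit the gauge invariance. Although the gauge transformation $e^{-i\psi}\mathcal L_{g,X,q}e^{i\psi}=\mathcal L_{g,\widetilde X,\widetilde q}$ stated in the introduction is written for $m=1$, an analogous conjugation identity holds in the polyharmonic setting and, crucially, for a $\psi$ vanishing to high order on $\p M$ it leaves the Dirichlet-to-Neumann map unchanged while shifting $X \mapsto X + 2\nabla\psi$ modulo lower order corrections. Having shown $X_1=X_2$ on $\p M$, I would choose a compactly supported $\psi\in C^\infty(M)$, vanishing together with its normal derivatives on $\p M$, such that $\widetilde X_j := X_j + 2\nabla\psi$ vanishes on $\p M$; this is possible since the common boundary value of $X_1$ and $X_2$ can be cancelled by an appropriate choice of $\nabla\psi$ near $\p M$. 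The transformed operators have identical Dirichlet-to-Neumann maps and now satisfy the extra boundary hypothesis of Theorem \ref{main th}.

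The main obstacle I anticipate is the boundary determination step itself: carrying out the oscillatory-testing argument on a curved admissible manifold requires a careful choice of boundary-localized phase and amplitude adapted to the metric $g$, and one must verify that the polyharmonic principal part does not swamp the first-order contribution in the $\lambda\to\infty$ asymptotics. In the Euclidean magnetic Schr\"odinger case this is classical, but the higher order operator $(-\Delta_g)^m$ complicates the WKB construction, since one must control $2m$ derivatives falling on the oscillatory factor and track which terms survive at leading order. Once the boundary value $X_1=X_2$ on $\p M$ is in hand, the gauge reduction and the appeal to Theorem \ref{main th} are routine, so the entire difficulty is concentrated in making the boundary asymptotics precise. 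I therefore expect the proof to be short modulo a technical boundary-determination lemma, which the $C^\infty$ hypothesis is designed to accommodate.
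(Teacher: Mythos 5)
Your first step---boundary determination of $X$ under the $C^\infty$ hypothesis, followed by an appeal to Theorem \ref{main th}---is indeed the paper's strategy, although the paper proves boundary determination by a different technique: it rewrites $\mathcal L_{g,X,q}u=0$ as a second order system for $(u,-\Delta_g u,\dots,(-\Delta_g)^{m-1}u)$ in boundary normal coordinates, factorizes the conjugated operator as $(D_n\otimes I+iE+iA_{12}-iB)(D_n\otimes I+iB)$ with $B(x,D_{x'})$ a first order pseudodifferential operator, shows the Cauchy data determine $B(x',0,D_{x'})$ modulo smoothing, and reads off $X|_{\p M}$ and $q|_{\p M}$ from the homogeneous terms of the symbol of $B$. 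Your concentrated oscillatory-data sketch is a conceivable alternative, but it is left entirely at the level of a plan, and it is not where the real problem with your proposal lies.

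The gauge-reduction step has a genuine, unfixable gap, and in fact it contradicts the central mechanism of the paper. First, for $m\ge 2$ there is no ``analogous conjugation identity'': writing $e^{-i\psi}(-\Delta_g)e^{i\psi}=-\Delta_g+F$ with $F u=-2i\<\nabla\psi,\nabla u\>_g+(|\nabla\psi|_g^2-i\Delta_g\psi)u$ a first order operator, one has $e^{-i\psi}(-\Delta_g)^m e^{i\psi}=(-\Delta_g+F)^m$, whose expansion contains cross terms such as $(-\Delta_g)^{m-1}F$ of order $2m-1\ge 3$. The conjugated operator is therefore \emph{not} of the form $(-\Delta_g)^m+\widetilde X+\widetilde q$, so Theorem \ref{main th} cannot be applied to it; the breakdown of gauge invariance for $m\ge 2$ is precisely the reason the paper can recover $X$ itself rather than $X$ modulo exact forms. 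Second, even setting this aside, your requirements on $\psi$ are mutually inconsistent: if $\psi|_{\p M}=0$ then all tangential derivatives of $\psi$ vanish on $\p M$, and if in addition $\p_\nu\psi|_{\p M}=0$ (which you need so that the boundary measurements are unchanged), then $\nabla\psi|_{\p M}=0$, hence $\widetilde X_j=X_j+2\nabla\psi=X_j$ on $\p M$. No measurement-preserving gauge can cancel a nonzero boundary value of $X_j$---this is exactly consistent with the fact that the Dirichlet-to-Neumann map determines $X|_{\p M}$. The correct way to finish, and what the paper does, is an extension rather than a gauge: having shown $X_1=X_2$ on $\p M$, extend $X_1$ to a $W^{1,\infty}$ vector field on $\R\times M_0$ and define the extension of $X_2$ to coincide with that of $X_1$ outside $M$; since the two fields agree on $\p M$ and are Lipschitz, the glued extension is again $W^{1,\infty}$, the two extensions agree on $(\R\times M_0)\setminus M^{\rm int}$, and the proof of Theorem \ref{main th} (which uses only these properties, not the vanishing of each $X_j$ on $\p M$) then goes through verbatim.
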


Let $\pi:\mathbb{R}\times M_0 \rightarrow M_0$ be the canonical projection $\pi(x_1,x')=x'$. It is interesting to notice that the boundary determination becomes unnecessary if $(\pi(M),g_0)$ is a simple $(n-1)$-dimensional manifold and $\partial M$ is connected.
\begin{Theorem}\label{main th3}
Let $(M,g)$ be admissible, and let $m\ge 2$ be an integer. %Let $(\R\times O,\id\times\phi)$ be a global chart on $\R\times M_0$ induced by a global chart $(O,\phi)$ on the transversal simple manifold $(M_0,g_0)$.
Suppose that $X_1,X_2\in W^{1,\infty}(M,TM)$ %satisfying $\sigma$-Assumption, $\sigma\in(0,1)$,
and $q_1,q_2\in L^\infty(M)$ are such that $0$ is not a Dirichlet eigenvalue of $\mathcal L_{g,X_1,q_1}$ and $\mathcal L_{g,X_2,q_2}$ in $M$. Suppose further that $(\pi(M),g_0)$ is a simple $(n-1)$-dimensional manifold and $\partial M$ is connected. If $N_{g,X_1,q_1}=N_{g,X_2,q_2}$, then $X_1=X_2$ and $q_1=q_2$.
\end{Theorem}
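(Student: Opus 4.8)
The plan is to follow the same three-step scheme---integral identity, complex geometric optics (CGO) solutions, inversion of a geodesic ray transform---that underlies Theorems \ref{main th} and \ref{main th2}, but to replace the extension/boundary-determination step by a direct appeal to the simplicity of $(\pi(M),g_0)$. First I would carry out the conformal reduction: writing $g=c\,\widetilde g$ with $\widetilde g=e\oplus g_0$ on $\R\times M_0$, each equation $\mathcal L_{g,X_j,q_j}u=0$ becomes a polyharmonic equation on $(\R\times M_0,\widetilde g)$ whose first and zeroth order coefficients are explicit $W^{1,\infty}$ and $L^\infty$ expressions in $X_j$, $q_j$ and $c$. Setting $X=X_1-X_2$ and $q=q_1-q_2$, the hypothesis $N_{g,X_1,q_1}=N_{g,X_2,q_2}$ forces the full Cauchy data of a matching pair of solutions to coincide, so that the boundary terms from the $2m$ integrations by parts cancel and one is left with the integral identity
\begin{equation}\label{int id}
\int_M \big(X(u_1)+q\,u_1\big)\,\overline v\,dV_{\widetilde g}=0
\end{equation}
for every $u_1$ solving $\mathcal L_{\widetilde g,X_1,q_1}u_1=0$ and every $v$ solving the formal adjoint equation $\mathcal L_{\widetilde g,X_2,q_2}^{*}v=0$.

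Next I would insert into \eqref{int id} the CGO solutions produced by the Carleman estimate of \cite{DKSU}, using the limiting weight $\varphi(x)=x_1$ and solutions $u_1=e^{s(x_1+i\psi)}(a_1+r_1)$, $v=e^{-s(x_1-i\psi)}(a_2+r_2)$, where $\psi(x')$ is a distance function generating the unit-speed geodesics of $(\pi(M),g_0)$, the amplitudes $a_j$ solve the attendant transport equations along those geodesics, and an oscillatory factor $e^{i\lambda x_1}$ supplies the Fourier variable dual to $x_1$. The Carleman estimate gives $\|r_j\|_{L^2}=o(1)$ as $s\to\infty$, so the exponentials cancel in the product $u_1\overline v$; dividing by $s$ and letting $s\to\infty$ reduces \eqref{int id} to $\int \big(X_1+i\<X',\nabla\psi\>_{g_0}\big)\,a_1\overline{a_2}\,e^{i\lambda x_1}\,dV=0$, where $X_1$ and $X'$ are the $\R$- and $M_0$-components of $X$. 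Integrating in $x_1$ and varying the geodesics of the simple manifold $(\pi(M),g_0)$, this says precisely that the (attenuated) geodesic ray transform of the $1$-form attached to $X$ vanishes on $\pi(M)$, for each Fourier mode in $x_1$.

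The heart of the argument---and the step I expect to be the main obstacle---is to upgrade this vanishing ray transform into the equality $X_1=X_2$ without a separate boundary-determination lemma. Injectivity of the ray transform on solenoidal $1$-forms over the simple manifold $(\pi(M),g_0)$ shows that $X=d\phi$ with $\phi$ vanishing on $\p(\pi(M))$; this is exactly the gauge ambiguity invisible to $N_{g,X,q}$ when $m=1$. For $m\ge 2$ there is no such gauge, and I would exploit the higher order of the operator to extract a second family of identities from \eqref{int id}: the freedom, allowed by the order-$2m$ leading symbol, to attach polynomial-in-$x_1$ factors to the CGO amplitudes yields integrals of $\phi$ against products of amplitudes which again factor through the geodesic ray transform on $\pi(M)$. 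Injectivity then pins $\phi$ down in the interior, and here the connectedness of $\p M$ is used decisively: it guarantees that the boundary normalization $\phi|_{\p(\pi(M))}=0$ propagates globally, forcing $\phi\equiv 0$ and hence $X_1=X_2$.

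With $X_1=X_2$ in hand, identity \eqref{int id} collapses to $\int_M q\,u_1\overline v\,dV_{\widetilde g}=0$. Substituting the same CGO solutions and letting $s\to\infty$ leaves $\int q\,a_1\overline{a_2}\,e^{i\lambda x_1}\,dV=0$; integrating in $x_1$ and varying the geodesics presents this as the vanishing of the geodesic ray transform of $q$ on the simple manifold $(\pi(M),g_0)$, and injectivity of that transform gives $q=0$, i.e.\ $q_1=q_2$. The only genuinely new difficulty relative to Theorems \ref{main th} and \ref{main th2} is the recovery of the potential part of $X$ described above; everything else is a transcription of the CGO construction already established in this paper, now read off over $(\pi(M),g_0)$ rather than over an enlarged simple manifold.
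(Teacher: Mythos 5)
There is a genuine gap, and it sits exactly at the step you yourself flag as ``the heart of the argument.'' Your plan conflates two different potentials. The injectivity of the attenuated ray transform (Proposition~\ref{ray transform} applied with $D=\pi(M)$) produces, for each small $\lambda$, a function $p$ \emph{on the transversal manifold} $\pi(M)$ with $p|_{\p \pi(M)}=0$ such that the Fourier-transformed data satisfy $f=-\lambda p$, $\alpha=dp$. From this, via analyticity in $\lambda$, one only concludes $dX^\flat=0$ in $M$, and then simple connectedness gives a primitive $\phi$ \emph{on $M$} with $X^\flat=d\phi$. This $\phi$ is a different object from $p$: it carries \emph{no} boundary normalization at all. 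Your assertion that ``the boundary normalization $\phi|_{\p(\pi(M))}=0$ propagates globally'' by connectedness is therefore not an argument --- there is no normalization to propagate. In Theorem~\ref{main th} the constancy of $\phi$ on the boundary came for free because $X_1-X_2$ was extended by zero to a larger manifold $M_1$, so $\nabla\phi=0$ near $\p M_1$; under the hypotheses of Theorem~\ref{main th3} no such extension is available (the vector fields are merely $W^{1,\infty}(M,TM)$), and proving that $\phi$ is constant on $\p M$ is precisely the new difficulty. Your proposed mechanism --- ``polynomial-in-$x_1$ factors'' attached to the amplitudes feeding back into the ray transform on $\pi(M)$ --- does not address this, because the obstruction is a boundary phenomenon on $\p M$, not an interior one on $\pi(M)$.

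The paper closes this gap with a dedicated proposition. One constructs \emph{two} families of CGO solutions, with phases $x_1+ir$ and $x_1-ir$, whose amplitudes $a_0$ are respectively holomorphic ($\overline{\p}a_0=0$) and anti-holomorphic ($\p a_0=0$) in the variable $\rho=x_1+ir$. Substituting into the integral identity \eqref{main int identity}, writing $X=\nabla\phi$, and integrating by parts in each two-dimensional slice $M_\theta=\{(x_1,r):(x_1,r,\theta)\in M\}$ (the $\overline\p$ falls on $a_0$ and dies) leaves pure boundary integrals
$$
\int_{\p M_\theta}\phi\, a_0\,d\rho=0,\qquad \int_{\p M_\theta}\overline{\phi}\, a_0\,d\rho=0
$$
for every holomorphic $a_0$ and every $\theta\in S^{n-2}$. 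The argument of \cite[Section~5]{DKSU2007} then shows that $\Re\phi$ and $\Im\phi$ coincide on $\p M_\theta$ with boundary values of holomorphic functions on $M_\theta$; since the imaginary parts of those functions are harmonic with vanishing boundary values, they are real constants on each connected component of $\p M_\theta$. Connectedness of $\p M$ enters only at this point, and for a different purpose than you assign it: it forces the component-wise constants (over all components of all slices $\p M_\theta$, all $\theta$) to agree, so that $\phi$ equals a single constant $c$ on all of $\p M$. Only after replacing $\phi$ by $\phi-c$ can one run the final step (the one from the proof of Theorem~\ref{main th}, with $\overline{\p}a_0=c$ and \cite[Lemma~5.1]{DKS}) in which the integration by parts needs $\phi|_{\p M}=0$ to kill boundary terms, yielding $\phi\equiv 0$, hence $X_1=X_2$ and then $q_1=q_2$. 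Without this boundary-constancy proposition --- which requires the second, oppositely-oriented family of CGOs, an ingredient absent from your proposal --- the argument does not close.
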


In the case of Euclidean space, the recovery of a zeroth order perturbation of the biharmonic operator, that is when $m=2$, has been studied by Isakov \cite{Isakov}, where a uniqueness result was obtained, similarly to the case of the Schr\"odinger operator. The recovery of a first order perturbation of the biharmonic operator from partial data was studied in \cite{KLU2} in a bounded domain, and in \cite{Y} in an infinite slab. Higher order operators occur in the areas of physics and geometry such as the study of the Kirchhoff plate equation in the theory of elasticity, and the study of the Paneitz-Branson operator in conformal geometry; for more details see \cite{GGS}.

Finally, we would like to remark that the problem considered in this paper can be viewed as generalization of the Calder\'on's inverse conductivity problem \cite{Cal}, known also as electrical impedance tomography. In the fundamental paper by Sylvester and Uhlmann \cite{SyU} it was shown that $C^2$ conductivities can be uniquely determined from boundary measurements. A corresponding result was proved by Dos~Santos~Ferreira, Kenig, Salo and Uhlmann \cite{DKSU} in the setting of admissible geometries.

The structure of the paper is as follows. In Section 2 a Carleman estimate is derived for polyharmonic operators based on a similar estimate for the Laplace-Beltrami operator. Section~\ref{section on CGOs} is devoted to the construction of  complex geometric optics solutions for the perturbed polyharmonic operator $\mathcal L_{g,X,q}$ with $X$ being a $W^{1,\infty}$ vector field and $q\in L^\infty(M)$. Then the proof of Theorem~\ref{main th} is given in Section~\ref{Proof}. Attenuated ray transform is the subject of Section~\ref{Attenuated ray transform}. In Section~\ref{section on boundary determination}, we show that the Dirichlet-to-Neumann map determines $X$ on the boundary, this leads to the proof of Theorem \ref{main th2}. Finally, the proof of Theorem \ref{main th3} is given in Section 7.

\section{Carleman estimates for polyharmonic operators}
Let $(M,g)$ be a Riemannian manifold with boundary. In this section, following \cite{DKSU,KSU}, we shall use the method of Carleman estimates to construct complex geometric optics solutions for the equation $\mathcal{L}_{g,X,q}u=0$ in $M$, with $X$ being a $W^{1,\infty}$ vector field on $M$ and $q\in L^\infty(M)$.

We start by recalling the definition of the Carleman weight for the semiclassical Laplace-Beltrami operator $-h^2\Delta_g$. Let $U$ be an open manifold without boundart such that $M\subset\subset  U$ and let $\varphi\in C^\infty(U,\R)$. Consider the conjugated operator
$$
P_\varphi=e^{\varphi/h}(-h^2\Delta_g)e^{-\varphi/h}.
$$
Following \cite{DKSU,KSU}, we say that $\varphi$ is a limiting Carleman weight for $-h^2\Delta_g$ in $U$, if it has non-vanishing differential, and if it satisfies the Poisson bracket condition
$$
\{\overline{p_\varphi},p_\varphi\}(x,\xi)=0\quad\text{when}\quad p_\varphi(x,\xi)=0,\quad(x,\xi)\in T^*M,
$$
where $p_\varphi$ is the semiclassical principal symbol of $P_\varphi$.

First we shall derive a Carleman estimate for the semiclassical polyharmonic operator $(-h^2\Delta_g)^m$, where $h>0$ is a small parameter, by iterating the corresponding Carleman estimate for the semiclassical Laplace-Beltrami operator $-h^2\Delta_g$, which we now proceed
to recall the following \cite{DKSU,KSU}.

We use the notation $d\Vol_g$ for the volume form of $(M,g)$. For any two functions $u,v$ on $M$, define an inner product
$$
(u|v):=\int_M u(x)\overline{v(x)}\,d\Vol_g(x),
$$
and the corresponding norm will be denoted by $\|\cdot\|_{L^2(M)}$. We also write for short
$$
\|\nabla u\|_{L^2(M)}=\||\nabla u|\|_{L^2(M)}=\(\int_M |\nabla u(x)|_g^2\,d\Vol_g(x)\)^{1/2}.
$$

We assume that $(M,g)$ is embedded in a compact manifold $(N,g)$ without boundary, and $\varphi$ is a limiting Carleman weight on $(U,g)$, where $U$ is an open submanifold of $N$ such that $M\subset \subset U$. By semiclassical spectral theorem one can define for $s\in\R$ the semiclassical Bessel potentials $J^s=(1-h^2\Delta_g)^{s/2}$. One has $J^s J^t=J^{s+t}$, and Bessel potentials commute with any function of $-\Delta_g$. Define for $s\in\R$ the semiclassical Sobolev space associated to the norm
$$
\|u\|_{H^s_{\rm scl}(N)}=\|J^s u\|_{L^2(N)}.
$$

Our starting point is the following Carleman estimate for the semiclassical Laplace-Beltrami
operator $-h^2\Delta_g$ which is due to Dos Santos Ferreira, Kenig, Salo and Uhlmann \cite[Lemma~4.3]{DKSU}. In what follows, $A\lesim B$ means that $A\le CB$ where $C > 0$ is a constant independent of $h$ and $A, B$.
\begin{Proposition}\label{Carleman estimate for LB}
Let $(U,g)$ be an open Riemannian manifold and $(M,g)$ be a smooth compact Riemannian submanifold with boundary such that $M\subset\subset U$. Let $\varphi$ be a limiting Carleman weight on $(U,g)$. Then for all $h>0$ small enough and $s\in \R$, we have
$$
h\|u\|_{H^{s+1}_{\rm scl}(N)}\lesim \|e^{\varphi/h}(-h^2\Delta_g)e^{-\varphi/h}u\|_{H^s_{\rm scl}(N)}
$$
for all $u\in C^\infty_0(M)$.
\end{Proposition}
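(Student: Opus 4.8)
This is the semiclassical Carleman estimate of Dos Santos Ferreira, Kenig, Salo and Uhlmann, so the plan is to reprove it by a positive commutator argument adapted to the limiting weight. First I would pass to the conjugated operator $P_\varphi=e^{\varphi/h}(-h^2\Delta_g)e^{-\varphi/h}$ and record its semiclassical principal symbol
$$p_\varphi(x,\xi)=|\xi|_g^2-|d\varphi|_g^2+2i\langle\xi,d\varphi\rangle_g=a(x,\xi)+i\,b(x,\xi),$$
with $a=|\xi|_g^2-|d\varphi|_g^2$ and $b=2\langle\xi,d\varphi\rangle_g$ both real-valued. Since $-h^2\Delta_g$ is formally self-adjoint we have $P_\varphi^*=e^{-\varphi/h}(-h^2\Delta_g)e^{\varphi/h}$, so the operators $A=\tfrac12(P_\varphi+P_\varphi^*)$ and $B=\tfrac1{2i}(P_\varphi-P_\varphi^*)$ are self-adjoint with real principal symbols $a$ and $b$, and $P_\varphi=A+iB$.

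Next I would exploit the algebraic identity
$$\|P_\varphi u\|_{L^2(N)}^2=\|Au\|_{L^2(N)}^2+\|Bu\|_{L^2(N)}^2+(i[A,B]u\,|\,u),\qquad u\in C_0^\infty(M),$$
which follows from the self-adjointness of $A$ and $B$. The operator $i[A,B]$ is semiclassically one order lower with principal symbol $h\{a,b\}$, and the defining property of a limiting Carleman weight is precisely that $\{\overline{p_\varphi},p_\varphi\}=2i\{a,b\}$ vanishes on the characteristic set $\{p_\varphi=0\}=\{a=b=0\}$. Hence the commutator term carries no sign on its own and a direct Gårding estimate yields no gain; the estimate fails to close at this borderline level.

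To force a gain I would convexify the weight, replacing $\varphi$ by $\varphi_\varepsilon=\varphi+\frac{h}{2\varepsilon}\varphi^2$ with $\varepsilon>0$ small but fixed and $h\ll\varepsilon$. A direct computation of the new bracket shows that on the characteristic set $\{a_\varepsilon=b_\varepsilon=0\}$ one has $\{a_\varepsilon,b_\varepsilon\}\gtrsim 1/\varepsilon$, while off the characteristic set the bracket lies in the ideal generated by $a_\varepsilon$ and $b_\varepsilon$; applying the sharp Gårding inequality to the positive part and absorbing the ideal contribution into $\|Au\|^2+\|Bu\|^2$ turns the commutator into a positive term $\gtrsim\frac{h^2}{\varepsilon}\|u\|_{L^2}^2$. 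Combining this with the ellipticity of $P_\varphi$ at large frequencies (an elliptic estimate off the characteristic set recovers one extra semiclassical derivative) yields $h\|u\|_{H^1_{\rm scl}(N)}\lesim\|P_{\varphi_\varepsilon}u\|_{L^2(N)}$. For fixed $\varepsilon$ the multiplier $e^{\varphi^2/(2\varepsilon)}$ relating $\varphi_\varepsilon$ to $\varphi$ is smooth, positive and bounded above and below, so substituting it back leaves the estimate intact for $\varphi$ with the $\varepsilon$-dependence absorbed into the implicit constant. Finally, to reach general $s$ I would apply the $s=0$ estimate to $J^su$ and commute $J^s$ through $P_\varphi$; since $J^s$ commutes with $\Delta_g$, only the commutator with $e^{\pm\varphi/h}$ survives, it gains a factor of $h$, and the resulting error $\lesim h\|u\|_{H^{s+1}_{\rm scl}(N)}$ is absorbed into the left-hand side once $h$ is small.

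The hard part will be the convexification step. The limiting (as opposed to strictly pseudoconvex) nature of $\varphi$ makes the plain commutator sign-indefinite, so everything hinges on engineering the strictly positive lower bound $\sim 1/\varepsilon$ for the convexified Poisson bracket on the characteristic set and then quantizing it so that this positivity survives at the operator level with the correct powers of $h$ and $\varepsilon$; balancing the regimes $h\ll\varepsilon\ll1$ so that the Gårding remainders and the $J^s$-commutator errors can all be absorbed is where the real care is needed.
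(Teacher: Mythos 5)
The paper itself contains no proof of this proposition: it is imported verbatim from \cite[Lemma~4.3]{DKSU}, so the only meaningful comparison is with the original argument of Dos Santos Ferreira--Kenig--Salo--Uhlmann, and your sketch is essentially a reconstruction of exactly that argument --- the $A+iB$ splitting, the commutator identity, the observation that the limiting condition kills the bracket on the characteristic set, the convexification $\varphi_\varepsilon=\varphi+\tfrac{h}{2\varepsilon}\varphi^2$, the G\aa rding/absorption step, the elliptic estimate at large frequencies, removal of the convexification through the $h$-independent multiplier $e^{\varphi^2/(2\varepsilon)}$, and the $J^s$ conjugation for general $s$. So the route is the right one; two points, however, need repair.

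First, a consistency slip: with $f(t)=t+\tfrac{h}{2\varepsilon}t^2$ one has $f''=h/\varepsilon$, so the bracket of the convexified symbols on the characteristic set is of size $h/\varepsilon$, not $1/\varepsilon$; since the symbol of $i[A,B]$ is $h\{a_\varepsilon,b_\varepsilon\}$, it is the bound $h/\varepsilon$ that produces the commutator term $\gtrsim\tfrac{h^2}{\varepsilon}\|u\|_{L^2}^2$ which you state --- as written, your two claims differ by a factor of $h$. (Had you used the $h$-independent convexification $\varphi+\varphi^2/(2\varepsilon)$, the bracket would indeed be $\sim 1/\varepsilon$, but then the relating multiplier would be $e^{\varphi^2/(2\varepsilon h)}$, which blows up as $h\to 0$ and cannot be absorbed into the constant.) Second, and this is the genuine gap, the reduction of general $s$ to $s=0$ fails as stated: for $u\in C_0^\infty(M)$ the function $J^su$ is \emph{not} compactly supported in $M$, because the Bessel potential is a nonlocal pseudodifferential operator on $N$, so the $s=0$ estimate --- proved only for $C_0^\infty(M)$ --- cannot be applied to $J^su$. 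The standard repair is to prove the $s=0$ estimate for functions supported in a slightly larger compact $M'$ with $M\subset\subset M'\subset\subset U$ (possible since $\varphi$ is a limiting weight on all of $U$), take $\chi\in C_0^\infty(M')$ with $\chi=1$ near $M$, apply that estimate to $\chi J^su\in C_0^\infty(M')$, and control the remainder $(1-\chi)J^su$ by pseudolocality: since $\operatorname{supp}(1-\chi)$ and $\operatorname{supp}u$ are disjoint, this term is $\mathcal O(h^\infty)\|u\|_{L^2}$ in every semiclassical Sobolev norm. Relatedly, to make sense of $P_\varphi J^su$ and of $[J^s,P_\varphi]$ as operators in the semiclassical calculus on $N$, you must first extend $\varphi$ smoothly from a neighborhood of $M$ to all of $N$; once these adjustments are made, your commutator bound $\|[J^s,P_\varphi]u\|_{L^2}\lesssim h\|u\|_{H^{s+1}_{\rm scl}(N)}$ and the absorption for small $h$ go through.
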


Next we shall derive a Carleman estimate for the operator $\mathcal L_{g,X,q}$ with $X$ being a $W^{1,\infty}$ vector field on $M$ and $q\in L^\infty(M)$. To that end we shall use Proposition~\ref{Carleman estimate for LB} with $s=-1$. We have the following result.
%\HOX{I think we shall use Proposition 2.1 with $s=-1$ instead of $s=0$. }

\begin{Proposition}\label{Carleman est L_{g,X,q} prop}
Let $(U,g)$ be an open Riemannian manifold and $(M,g)$ be a smooth compact Riemannian submanifold with boundary such that $M\subset\subset U$. Let $\varphi$ be a limiting Carleman weight on $(U,g)$. Suppose that $X$ is a $W^{1,\infty}$ vector field on $M$ and $q\in L^\infty(M)$. Then for all $h>0$ small enough, we have
\begin{equation}\label{Carleman est L_{g,X,q}}
\|u\|_{L^2(N)}\lesim \frac{1}{h^m}\|e^{\varphi/h}(h^{2m}\mathcal{L}_{g,X,q})e^{-\varphi/h}u\|_{H^{-1}_{\rm scl}(N)},
\end{equation}
for all $u\in C^\infty_0(M)$.
\end{Proposition}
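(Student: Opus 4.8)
The plan is to bootstrap from the base Carleman estimate for the Laplacian (Proposition~\ref{Carleman estimate for LB}) to the polyharmonic operator $(-h^2\Delta_g)^m$, and then absorb the first-order perturbation $h^{2m}(X+q)$ as a lower-order error term. The estimate \eqref{Carleman est L_{g,X,q}} is phrased with $L^2$ on the left and $H^{-1}_{\rm scl}$ on the right, with a gain of $h^m$, which strongly suggests applying Proposition~\ref{Carleman estimate for LB} exactly $m$ times, each application with the shift $s=-1$ so that the output norm of one iteration feeds into the input norm of the next.

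\textbf{Iteration for the unperturbed polyharmonic operator.} First I would prove the intermediate estimate
\begin{equation*}
h^m\|u\|_{H^{-1+m}_{\rm scl}(N)}\lesim \|e^{\varphi/h}(-h^2\Delta_g)^m e^{-\varphi/h}u\|_{H^{-1}_{\rm scl}(N)},\qquad u\in C^\infty_0(M).
\end{equation*}
The point is that conjugation distributes over powers: $e^{\varphi/h}(-h^2\Delta_g)^m e^{-\varphi/h}=P_\varphi^m$, where $P_\varphi=e^{\varphi/h}(-h^2\Delta_g)e^{-\varphi/h}$. Applying Proposition~\ref{Carleman estimate for LB} with $s=-1$ to the function $u$ gives $h\|u\|_{H^{0}_{\rm scl}}\lesim \|P_\varphi u\|_{H^{-1}_{\rm scl}}$, but to iterate I want to feed $P_\varphi u$ back in. The cleanest route is to prove by induction on $k$ that $h^k\|u\|_{H^{-1+k}_{\rm scl}}\lesim \|P_\varphi^k u\|_{H^{-1}_{\rm scl}}$. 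For the inductive step one applies Proposition~\ref{Carleman estimate for LB} with the shift $s=-1+k$, obtaining $h\|u\|_{H^{k}_{\rm scl}}\lesim\|P_\varphi u\|_{H^{-1+k}_{\rm scl}}$; combined with the induction hypothesis applied to $P_\varphi u$ (using $P_\varphi^{k}(P_\varphi u)=P_\varphi^{k+1}u$), this yields the $k+1$ estimate. A mild technical wrinkle here is that Proposition~\ref{Carleman estimate for LB} is stated for $u\in C^\infty_0(M)$, whereas $P_\varphi u$ need not be compactly supported; since $u\in C^\infty_0(M)$ implies $P_\varphi u\in C^\infty_0(M)$ (conjugation by $e^{\mp\varphi/h}$ preserves support and $\Delta_g$ is local), this is harmless. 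Taking $k=m$ gives the displayed intermediate estimate; in particular, dropping to the weaker left-hand norm $h^m\|u\|_{L^2(N)}\lesim h^m\|u\|_{H^{-1+m}_{\rm scl}(N)}$ (valid since $-1+m\ge 1\ge 0$ for $m\ge 2$, and semiclassical Sobolev norms are monotone in the order) yields
\begin{equation*}
\|u\|_{L^2(N)}\lesim \frac{1}{h^m}\|e^{\varphi/h}(-h^2\Delta_g)^m e^{-\varphi/h}u\|_{H^{-1}_{\rm scl}(N)}.
\end{equation*}

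\textbf{Absorbing the perturbation.} Next I would write $h^{2m}\mathcal L_{g,X,q}=(-h^2\Delta_g)^m+h^{2m}(X+q)$ and estimate the conjugated perturbation. Setting $R:=e^{\varphi/h}h^{2m}(X+q)e^{-\varphi/h}$, the key observation is that conjugating the \emph{zeroth-order} multiplication operator $q$ leaves it unchanged, while conjugating the first-order operator $X$ produces $X+\tfrac{1}{h}\langle X,\nabla\varphi\rangle_g$ up to factors of $h$ absorbed in the $h^{2m}$ prefactor. Because we have two extra powers of $h$ per derivative in $h^{2m}X=h^{2m-1}\cdot(h X)$, the worst term is $h^{2m-1}$ times a bounded first-order semiclassical operator. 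I would then bound $\|Ru\|_{H^{-1}_{\rm scl}(N)}$ by duality: since $X\in W^{1,\infty}$ and $q\in L^\infty$, the operator $u\mapsto h^{2m}(X+q)u$ maps $L^2\to L^2$ with the gradient term costing one power of $h^{-1}$ relative to the $H^{-1}_{\rm scl}$ measurement (so effectively $h^{2m-1}$), giving $\|Ru\|_{H^{-1}_{\rm scl}(N)}\lesim h^{2m-1}\|u\|_{L^2(N)}$. The $W^{1,\infty}$ (rather than merely $L^\infty$) hypothesis on $X$ is what lets me place the derivative on test functions when pairing against $H^1_{\rm scl}$ and keep constants $h$-independent. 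Finally, using the triangle inequality
\begin{equation*}
\|u\|_{L^2(N)}\lesim\frac{1}{h^m}\big\|e^{\varphi/h}(h^{2m}\mathcal L_{g,X,q})e^{-\varphi/h}u\big\|_{H^{-1}_{\rm scl}(N)}+\frac{1}{h^m}\|Ru\|_{H^{-1}_{\rm scl}(N)},
\end{equation*}
the last term is $\lesim h^{m-1}\|u\|_{L^2(N)}$, which for $m\ge 2$ (indeed already $m\ge 1$) is $o(1)$ as $h\to 0$ and can be absorbed into the left-hand side for $h$ small enough, completing the proof.

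\textbf{The main obstacle} I anticipate is the careful accounting of the powers of $h$ when conjugating the first-order term $X$: one must verify that the commutator/gauge term $\tfrac1h\langle X,\nabla\varphi\rangle_g$ arising from $e^{\varphi/h}Xe^{-\varphi/h}$ still leaves a net positive power of $h$ (namely $h^{2m-1}$) after accounting for the $h^{2m}$ prefactor and the loss of one power when moving from $L^2$ to $H^{-1}_{\rm scl}$, so that the perturbation is genuinely subprincipal and absorbable. The $H^{-1}_{\rm scl}$ duality bound for the vector-field term, exploiting $X\in W^{1,\infty}$ to integrate by parts with $h$-uniform constants, is the one place requiring genuine care rather than routine iteration.
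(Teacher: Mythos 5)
Your proposal is correct for $m\ge 2$ and follows the same overall strategy as the paper: iterate the base estimate of Proposition~\ref{Carleman estimate for LB} $m$ times with the Sobolev index shifted by one at each step (your induction on $k$ is exactly what the paper compresses into ``iterating the Carleman estimate $m$ times''), specialize to $s=-1$, and absorb the conjugated perturbation for $h$ small. Where you genuinely differ is in how the perturbation is measured. The paper estimates $e^{\varphi/h}h^{2m}(X+q)e^{-\varphi/h}u$ in $L^2(N)$ by $h^m\|u\|_{H^1_{\rm scl}(N)}$, using only $\|X\|_{L^\infty}$ and $\|q\|_{L^\infty}$, and then absorbs it into the left-hand side kept at the level $H^1_{\rm scl}$ (legitimate since $m-1\ge 1$; as literally written, the paper's weakened estimate \eqref{Carleman for (-h^2Delta)^m} with $L^2$ on the left is too weak for this absorption, a small inconsistency in the write-up). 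You instead bound the conjugated perturbation directly in $H^{-1}_{\rm scl}(N)$ by duality, integrating by parts to get $\|Ru\|_{H^{-1}_{\rm scl}(N)}\lesim h^{2m-1}\|u\|_{L^2(N)}$; this genuinely uses $X\in W^{1,\infty}$ (to control $\div_g X$ with $h$-uniform constants) and makes the absorption consistent with the $L^2$ left-hand side, so your version is internally cleaner, while the paper's variant has the advantage of requiring only $X\in L^\infty$ in this step. One slip: your parenthetical claim that $m\ge 1$ would already suffice is false --- for $m=1$ the error term is $Ch^{m-1}\|u\|_{L^2(N)}=C\|u\|_{L^2(N)}$ with $C$ not necessarily small, so it cannot be absorbed; this is precisely why the first-order perturbation of the Laplacian requires different techniques. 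Since the proposition is only used with $m\ge 2$, this does not affect the correctness of your argument here.
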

\begin{proof}
Iterating the Carleman estimate in Proposition~\ref{Carleman est L_{g,X,q} prop} $m$ times, $m\ge2$, we get the following Carleman estimate for the polyharmonic operator,
$$
h^m\|u\|_{H^{s+m}_{\rm{scl}}(N)}\lesim\|e^{\varphi/h}(-h^2\Delta_g)^m e^{-\varphi/h}u\|_{H^{s}_{\rm{scl}}(N)},
$$
for all $u\in C^\infty_0(\Omega)$, $s\in\R$ and $h>0$ small enough. We shall use this estimate with $s=-1$:
\begin{equation}\label{Carleman for (-h^2Delta)^m 1}
h^m\|u\|_{H^{m-1}_{\rm{scl}}(N)}\lesim \|e^{\varphi/h}(-h^2\Delta_g)^m e^{-\varphi/h}u\|_{H^{-1}_{\rm scl}(N)},
\end{equation}
for all $u\in C^\infty_0(N)$ and $h>0$ small enough. Since we are dealing with first order
perturbations of the polyharmonic operator and $m\ge 2$, the following weakened version of \eqref{Carleman for (-h^2Delta)^m 1} will be sufficient for our purposes
\begin{equation}\label{Carleman for (-h^2Delta)^m}
h^m\|u\|_{L^2(N)}\lesim \|e^{\varphi/h}(-h^2\Delta_g)^m e^{-\varphi/h}u\|_{H^{-1}_{\rm scl}(N)},
\end{equation}
for all $u\in C^\infty_0(M)$ and $h>0$ small enough.

It is easy to see that
\begin{equation}\label{q part}
\|e^{\varphi/h}h^{2m}qe^{-\varphi/h}u\|_{L^2(N)}\lesim h^{2m}\|q\|_{L^\infty(M)}\|u\|_{H^{1}_{\rm{scl}}(N)}.
\end{equation}

Note that $e^{\varphi/h}h^{2m}X(e^{-\varphi/h}u)=-h^{2m-1}\<X,\nabla\varphi\>_gu+h^{2m}\<X,\nabla u\>_g$. Therefore, since $m\ge 2$
\begin{align*}
\|h^{2m-1}\<X,\nabla\varphi\>_gu\|_{L^2(N)}&\le h^{2m-1}\|\<X,\nabla\varphi\>_g\|_{L^\infty(M)}\|u\|_{L^2(N)}\\
&\le h^{m}\|\<X,\nabla\varphi\>_g\|_{L^\infty(M)}\|u\|_{H^1_{\rm scl}(N)}
\end{align*}
and
$$
\|h^{2m}\<X,h\nabla u\>_g\|_{L^2(N)}\le h^{2m-1}\|X\|_{L^\infty(M)}\|u\|_{H^1_{\rm scl}(N)}\le h^{m}\|X\|_{L^\infty(M)}\|u\|_{H^1_{\rm scl}(N)}.
$$
imply
$$
\|e^{\varphi/h}h^{2m}X(e^{-\varphi/h}u)\|_{L^2(N)}\lesim h^m \|u\|_{H^1_{\rm scl}(N)}.
$$
Combining this together with estimates \eqref{Carleman for (-h^2Delta)^m} and \eqref{q part}, we get the result.
\end{proof}

Set
$$
\mathcal L_{\varphi}:=e^{\varphi/h}(h^{2m}\mathcal L_{g,X,q})e^{-\varphi/h}.
$$
Then we have
$$
\<\mathcal L_{\varphi} u,\overline{v}\>_{\Omega}= \<u,\overline{\mathcal L_{\varphi}^* v}\>_{\Omega},\quad u,v\in C^\infty_0(\Omega),
$$
where $\mathcal L_{\varphi}^*=e^{-\varphi/h}(h^2\mathcal L_{g,-X,-\div_g X+{q}})e^{\varphi/h}$ is the formal adjoint of $\mathcal L_{\varphi}$, and $\langle \cdot,\cdot\rangle_M$ is the distribution duality on $M$. The estimate in Proposition~\ref{Carleman est L_{g,X,q} prop} holds for $\mathcal L_{\varphi}^*$, since $-\varphi$ is a limiting Carleman weight as well.

To construct the complex geometric optics solutions for the operator $\mathcal L_{g,X,q}$, we need to convert the Carleman estimate \eqref{Carleman est L_{g,X,q}} for $\mathcal L_{\varphi}^*$ into the following solvability result. The proof is essentially well-known, and we include it here for the convenience of the reader. We shall use the following notation for the semiclassical Sobolev norm on $M$
$$
\|u\|_{H^{1}_{\rm scl}(M)}^2=\|u\|_{L^2(M)}^2+\|h\nabla u\|_{L^2(M)}^2.
$$

\begin{Proposition}\label{Solvability result}
Let $X$ be a $W^{1,\infty}$ vector field on $M$ and $q\in L^\infty(M)$ and assume that $m\ge 2$. If $h>0$ is small enough, then for any $v\in L^2(M)$ there is a solution $u\in H^1(M)$ of the equation
$$
e^{\varphi/h}h^{2m}\mathcal{L}_{g,X,q}e^{-\varphi/h}u=v
$$
satisfying
$$
\|u\|_{H^1_{\rm{scl}}(M)}\le \frac{C}{h^m}\|v\|_{L^2(M)}.
$$
\end{Proposition}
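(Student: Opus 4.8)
The plan is to deduce solvability from the Carleman estimate by the standard functional-analytic duality argument (Hahn--Banach together with Riesz representation), using the fact, already noted above, that the estimate of Proposition~\ref{Carleman est L_{g,X,q} prop} also holds for the adjoint $\mathcal L_\varphi^*$ because $-\varphi$ is again a limiting Carleman weight. The mechanism is that an a priori estimate bounding a test function by the $H^{-1}_{\rm scl}$-norm of its image under $\mathcal L_\varphi^*$ turns the $L^2$ pairing with $v$ into a bounded functional, whose Riesz representative solves the dual equation $\mathcal L_\varphi u=v$.

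First I would record the estimate for the adjoint: for all small $h>0$ and all $w\in C^\infty_0(M)$,
$$
\|w\|_{L^2(N)}\lesssim \frac{1}{h^m}\|\mathcal L_\varphi^* w\|_{H^{-1}_{\rm scl}(N)}.
$$
On the subspace $W:=\mathcal L_\varphi^*\big(C^\infty_0(M)\big)\subset H^{-1}_{\rm scl}(N)$ I would define the linear functional
$$
\ell(\mathcal L_\varphi^* w):=(w\,|\,v)_{L^2(M)}.
$$
The estimate makes $\ell$ well defined, since $\mathcal L_\varphi^* w_1=\mathcal L_\varphi^* w_2$ forces $\|w_1-w_2\|_{L^2(N)}=0$ and hence $w_1=w_2$ on $M$; and it makes $\ell$ bounded, because Cauchy--Schwarz followed by the estimate gives
$$
|\ell(\mathcal L_\varphi^* w)|\le \|v\|_{L^2(M)}\,\|w\|_{L^2(M)}\lesssim \frac{1}{h^m}\|v\|_{L^2(M)}\,\|\mathcal L_\varphi^* w\|_{H^{-1}_{\rm scl}(N)},
$$
so that $\ell$ has norm $\lesssim h^{-m}\|v\|_{L^2(M)}$ on $W$.

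Next I would extend $\ell$ by Hahn--Banach to a bounded functional on all of $H^{-1}_{\rm scl}(N)$ with the same norm, and represent it, via the $H^{-1}_{\rm scl}(N)$--$H^1_{\rm scl}(N)$ duality, by an element $u\in H^1_{\rm scl}(N)$ with $\|u\|_{H^1_{\rm scl}(N)}\lesssim h^{-m}\|v\|_{L^2(M)}$ and $\ell(f)=\langle f,u\rangle$ for all $f\in H^{-1}_{\rm scl}(N)$. Taking $f=\mathcal L_\varphi^* w$ and using that $\mathcal L_\varphi^*$ is the formal adjoint of $\mathcal L_\varphi$, the representation identity becomes $(w\,|\,v)_{L^2(M)}=\langle \mathcal L_\varphi^* w,u\rangle=\langle w,\mathcal L_\varphi u\rangle$ for every $w\in C^\infty_0(M)$, which says precisely that $\mathcal L_\varphi u=v$ in the sense of distributions on $M$. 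Restricting $u$ to $M$ produces the desired solution, and $\|u\|_{H^1_{\rm scl}(M)}\le \|u\|_{H^1_{\rm scl}(N)}\lesssim h^{-m}\|v\|_{L^2(M)}$ gives the stated bound.

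The argument is essentially routine; the only place demanding attention is the consistent bookkeeping of the three pairings involved --- the $L^2(M)$ inner product defining $\ell$, the $H^{-1}_{\rm scl}(N)$--$H^1_{\rm scl}(N)$ duality furnished by Riesz, and the formal adjoint identity for $\mathcal L_\varphi$ --- so that the complex conjugations line up and the final identity genuinely encodes $\mathcal L_\varphi u=v$. A minor additional point is that the estimate and the representative $u$ live on the closed manifold $N$ whereas $v$ and the sought solution only concern $M$; since the test functions $w$ are compactly supported in $M$, the weak equation is asserted only on $M$, which is exactly what is required.
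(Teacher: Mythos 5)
Your proposal is correct and follows essentially the same route as the paper's own proof: the Carleman estimate for $\mathcal L_\varphi^*$ (valid since $-\varphi$ is also a limiting Carleman weight) makes the functional on $\mathcal L_\varphi^*\bigl(C_0^\infty(M)\bigr)$ well defined and bounded, Hahn--Banach extends it, and Riesz representation in the $H^{-1}_{\rm scl}$--$H^{1}_{\rm scl}$ duality produces $u$ satisfying $\mathcal L_\varphi u=v$ weakly on $M$ with the stated bound. If anything, your bookkeeping is cleaner than the paper's write-up, which extends the functional to ``$L^2(N)$'' and carries leftover Euclidean notation ($\Omega$, $\R^n$) where $H^{-1}_{\rm scl}(N)$, $M$, and $N$ are meant.
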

\begin{proof}
Let $v\in H^{-1}(M)$ and let us consider the following complex linear functional,
$$
L: \mathcal L_{\varphi}^* C_0^\infty(M)\to \C, \quad \mathcal L_{\varphi}^* w \mapsto \langle w, \overline{v}\rangle_M.
$$
By the Carleman estimate \eqref{Carleman est L_{g,X,q}} for $\mathcal L_{\varphi}^*$, the map $L$ is well-defined.  
%\HOX{To use Proposition 2.2, we want to take the norm of $w$ to be $\|w\|_{L^{2}(N)}$ (instead of $\|w\|_{H^{1}_{scl}(N)}$) and the norm of $\mathcal{L}^{\ast}_{\varphi}w$ to be $\|\mathcal{L}^{\ast}_{\varphi}w\|_{H^{-1}_{scl}(N)}$ (instead of $\|\mathcal{L}^{\ast}_{\varphi}w\|_{L^{2}(N)}$).}
Let $w\in C_0^\infty(M)$. Then we have
\begin{align*}
|L(\mathcal L_{\varphi}^* w)|=|\langle w, \overline{v}\rangle_M|&\le \|w\|_{L^2(N)}\|v\|_{L^2(M)}\\
&\lesim \frac{1}{h^m}\|v\|_{L^2(M)}\|\mathcal L_{\varphi}^* w\|_{H^{-1}_{\rm scl}(N)}.
\end{align*}
By the Hahn-Banach theorem, we may extend $L$ to a linear continuous functional $\tilde L$ on $L^2(N)$, without increasing its norm. 
By the Riesz representation theorem, there exists $u\in H^1(\R^n)$ such that for all $\psi\in H^{-1}(\R^n)$,
%\HOX{Should the norm of $u$ to be $\|u\|_{H^{1}_{scl}(N)}$ (which is the dual of $H^{-1}_{scl}(N)$) instead of $\|u\|_{H^{m}_{scl}(\mathbb{R}^n)}$? }
$$
\tilde L(\psi)=\langle \psi,\overline{u}\rangle_{\R^n}, \quad \textrm{and}\quad \|u\|_{H^1_{\rm scl}(\R^n)}\lesim \frac{1}{h^m}\|v\|_{L^2(\Omega)}. 
$$
Let us now show that $\mathcal L_{\varphi} u=v$ in $\Omega$. To that end, let $w\in C_0^\infty(\Omega)$. Then 
$$
\langle \mathcal L_{\varphi} u,\overline{w}\rangle_\Omega=\langle u,\overline{\mathcal L_{\varphi}^*w}\rangle_{\R^n} =\overline{\tilde L(\mathcal L_{\varphi}^*w)}=\overline{\langle w,\overline{v}\rangle_\Omega}=\langle v,\overline{w}\rangle_\Omega. 
$$
The proof is complete. 
\end{proof}

\section{Complex geometric optics solutions}\label{section on CGOs}
Let $\varphi$ be a limiting Carleman weight in an admissible manifold $(M,g)$. We will construct solutions to $\mathcal L_{g,X,q}u = 0$ in $M$ of the form
\begin{equation}\label{cgo form}
u=e^{-(\varphi+i\psi)/h}(a+r),
\end{equation}
where $a$ is an amplitude, $r$ is a correction term which is small when $h>0$ is small, and $\psi$ is a real valued phase.

Set $\rho=\varphi+i\psi$ for the complex valued phase. Consider the conjugated operator $P_\rho=e^{\rho/h}h^{2m}\mathcal L_{g,X,q}e^{-\rho/h}$, which has the following expression
$$
P_\rho=(-h^2\Delta_g-|\nabla \rho|_g^2+h\Delta_g \rho+2h\nabla \rho)^m+h^{2m}X-h^{2m-1}\<-\nabla \rho,X\>_g+h^{2m}q.
$$
Here and in what follows, the norm $|\cdot|_g^2$ and the inner product $\<\cdot,\cdot\>_g$ are extended to complex valued tangent vectors by
$$
\<\zeta,\eta\>_g=\<\Re\zeta,\Re\eta\>_g-\<\Im\zeta,\Im\eta\>_g+i(\<\Re\zeta,\Im\eta\>_g+\<\Im\zeta,\Re\eta\>_g),\quad |\zeta|_g^2=\<\zeta,\zeta\>_g.
$$
Since $m\ge 2$, in order to get
$$
e^{\varphi/h}h^{2m}\mathcal{L}_{g,X,q}(e^{-\varphi/h}a)=\mathcal O(h^{m+1}),
$$
in $L^2(M)$, we should choose $\rho$ satisfying  the following eikonal equation
\begin{equation}\label{eikonal}
|\nabla \rho|_g^2=0\quad\text{in }M,
\end{equation}
and choose $a\in C^\infty(M)$ satisfying the following transport equation
\begin{equation}\label{transport}
(2\nabla \rho+\Delta_g \rho)^m a=0\quad\text{in }M.
\end{equation}

Recall that $(M,g)$ is conformally embedded in $\R\times (M_0,g_0)$, where $(M_0,g_0)$ is some simple $(n-1)$-dimensional manifold. If necessary, we replace $M_0$ with a slightly larger simple manifold. Therefore, we can and shall assume that for some simple $(D,g_0)\subset\subset (M_0^{\rm int},g_0)$ one has
\begin{equation}\label{in larger manifold}
(M,g)\subset\subset (\R\times D^{\rm int},g)\subset (\R\times M_0^{\rm int},g).
\end{equation}
Note that $\R\times M_0$ has global coordinate chart in which the metric $g$ has the following form
\begin{equation}\label{admissible metric form}
g(x)=c(x)\left(\begin{matrix}
1&0\\
0&g_0(x')
\end{matrix}\right),
\end{equation}
where $c>0$ and $g_0$ is simple. A natural choice of the limiting Carleman weight is $\varphi(x)=x_1$. Then the equation \eqref{eikonal} for the complex valued phase $\rho$ becomes
$$
|\nabla \psi|^2=\frac{1}{c},\quad \p_{x_1}\psi=0.
$$
This equation will be solved using special coordinates on $(M,g)$. This is based on the so-called polar coordinates on the transversal simple manifold $(M_0,g_0)$. Let $\omega\in D$ be such that $(x_1,\omega)\notin M$ for all $x_1$. Points of $M$ have the form $x=(x_1,r,\theta)$ where $(r,\theta)$ are polar normal coordinates in $(D,g_0)$ with center $\omega$. That is, $x'=\exp_\omega^D(r\theta)$ where $r>0$ and $\theta\in S^{n-2}$. In terms of these coordinates the metric $g$ has the form
$$
g(x_1,r,\theta)=c(x_1,r,\theta)\left(\begin{matrix}
1&0&0\\
0&1&0\\
0&0&m(r,\theta)
\end{matrix}\right),
$$
where $m$ is a smooth positive definite matrix.

We solve \eqref{eikonal} by simply taking $\psi(x)=\psi_\omega(x)=r$. Thus, the complex valued phase has the form $\rho=x_1+ir$ and its gradient is $\nabla \rho=\frac{2}{c}\overline{\p}$, where
$$
\overline{\p}=\frac{1}{2}\(\de{}{x_1}+i\de{}{r}\).
$$

Next, we solve transport equation \eqref{transport}. In the coordinates $(x_1,r,\theta)$ equation \eqref{transport} becomes
$$
\(\frac{4}{c}\overline{\p}+\frac{1}{c}\log\frac{|g|}{c^2}\)^m a=0.
$$
Consider $a$ as the function having the following form
$$
a=|g|^{-1/4}c^{1/2}a_0(x_1,r,\theta)b(\theta)
$$
where $b$ is smooth and $a_0$ is such that $\overline{\p}a_0=ca_1$ for some $a_1$ satisfying $\overline{\p}a_1=0$.

Note that \eqref{cgo form} will be a solution for $\mathcal L_{g,X,q}u=0$ if $P_\rho (a+hr)=0$. Then, with the choice of $\varphi$ and $\psi$ made above, this equation is equivalent to the following
$$
e^{\varphi/h} h^{2m}\mathcal L_{g,X,q} e^{-\varphi/h}(e^{-i\psi/h}hr)=-e^{-i\psi/h}(h^{2m}\mathcal L_{g,X,q}a+h^{2m-1}\<\nabla \rho,X\>_g a).
$$
This will be solved by using Proposition~\ref{Solvability result}. We find $r\in H^1(M)$ satisying
$$
\|r\|_{H^1_{\rm scl}(M)}=\mathcal O(1).
$$
The discussion of this section can be summarized in the following proposition.
\begin{Proposition}\label{existence of CGOs}
Assume that $(M,g)$ satisfies \eqref{in larger manifold} and \eqref{admissible metric form}, and let $m\ge 2$ be an integer. Suppose that $X$ is a $W^{1,\infty}$ vector field on $M$ and $q\in L^\infty(M)$. Let $\omega\in D$ such that $(x_1,\omega)\notin M$ for all $x_1$. If $(r,\theta)$ are polar normal coordinates in $(D,g_0)$ with center $\omega$, then the equation
$$
\mathcal L_{g,X,q}u=0\quad\text{in}\quad M
$$
has a solution of the form
$$
u=e^{-\frac{1}{h}(\varphi+i\psi)}(|g|^{-1/4}c^{1/2}a_0(x_1,r,\theta)b(\theta)+hr),
$$
where $\overline{\p}a_0=ca_1$ for some $a_1$ depending on $(x_1,r)$ and satisfying $\overline{\p}a_1=0$, $b$ is smooth and the remainder term $r\in H^1(M)$ such that $\|r\|_{H^1_{\rm scl}(M)}=\mathcal O(1)$.
\end{Proposition}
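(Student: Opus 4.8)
The plan is to seek the solution in the form \eqref{cgo form} with $\rho=\varphi+i\psi$ and $\varphi(x)=x_1$, and to reduce $\mathcal L_{g,X,q}u=0$ to the single identity $P_\rho(a+hr)=0$ for the conjugated operator $P_\rho=e^{\rho/h}h^{2m}\mathcal L_{g,X,q}e^{-\rho/h}$ whose expansion is recorded above. I would split $P_\rho$ into its principal $m$-th power part and the first-order perturbation, first annihilating the top two orders in $h$ of the principal part by imposing the eikonal equation \eqref{eikonal} and the transport equation \eqref{transport}, and then absorbing the residual together with the perturbation into the correction $r$ via Proposition~\ref{Solvability result}. Here $a=|g|^{-1/4}c^{1/2}a_0(x_1,r,\theta)b(\theta)$ is the amplitude named in the statement.

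To solve the eikonal equation I would exploit the product structure \eqref{in larger manifold}, \eqref{admissible metric form}: with $\varphi=x_1$, equation \eqref{eikonal} becomes $|\nabla\psi|^2=1/c$ and $\p_{x_1}\psi=0$. In polar normal coordinates $(x_1,r,\theta)$ centred at $\omega\in D$ with $(x_1,\omega)\notin M$, the radial geodesic distance $\psi=r$ solves this, giving $\rho=x_1+ir$ and $\nabla\rho=\frac{2}{c}\overline{\p}$. Writing $T:=2\nabla\rho+\Delta_g\rho$, the transport equation \eqref{transport} reads $T^m a=0$, and in these coordinates $T=\frac{4}{c}\overline{\p}+\frac{1}{c}\log\frac{|g|}{c^2}$. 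With the ansatz above, in which $b$ depends only on $\theta$ so that $\overline{\p}b=0$, the conjugating factor $|g|^{-1/4}c^{1/2}$ is chosen precisely to cancel the zeroth-order coefficient, reducing the action of $T$ to $\frac{4}{c}|g|^{-1/4}c^{1/2}(\overline{\p}a_0)b$. Imposing $\overline{\p}a_0=ca_1$ turns this into $4|g|^{-1/4}c^{1/2}a_1b$, and a second application of $T$ produces the factor $\overline{\p}a_1=0$; since $m\ge2$ this gives $T^m a=0$. The scalar $\overline{\p}$-equations for $a_1$ and $a_0$ in the $(x_1,r)$-variables are solved in the usual way.

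The substantive computation is to show that these choices make the residual small. After \eqref{eikonal} the base operator inside the $m$-th power collapses to $L=-h^2\Delta_g+hT$, and upon expanding $L^m=(hT-h^2\Delta_g)^m$ as a sum of ordered products the only term of order $h^m$ is $h^m T^m$, every other word carrying at least one additional factor of $h$. Applying this to the smooth, $h$-independent amplitude $a$ and using \eqref{transport} gives $L^m a=h^m T^m a+\mathcal O(h^{m+1})=\mathcal O(h^{m+1})$ in $L^2(M)$. Because $m\ge2$, the remaining terms $h^{2m}Xa$, $h^{2m-1}\<\nabla\rho,X\>_g a$ and $h^{2m}qa$ are all $\mathcal O(h^{m+1})$ as well, using $2m-1\ge m+1$ together with the boundedness of $X\in W^{1,\infty}$ and $q\in L^\infty$ against the smooth factor $a$; hence $P_\rho a=\mathcal O(h^{m+1})$ in $L^2(M)$. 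The bound is tight at $m=2$, where $2m-1=m+1=3$, which is exactly why the first-order perturbation is still absorbable.

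Finally I would solve for $r$. The equation $P_\rho(a+hr)=0$ is equivalent to $e^{\varphi/h}h^{2m}\mathcal L_{g,X,q}e^{-\varphi/h}\bigl(e^{-i\psi/h}hr\bigr)=-e^{-i\psi/h}P_\rho a$, whose right-hand side is $\mathcal O(h^{m+1})$ in $L^2(M)$ by the previous step, the unimodular factor leaving the $L^2$ norm unchanged. Proposition~\ref{Solvability result} then produces $v':=e^{-i\psi/h}hr$ with $\|v'\|_{H^1_{\rm scl}(M)}\lesim h^{-m}\cdot h^{m+1}=h$. Since $hr=e^{i\psi/h}v'$, we get $\|r\|_{L^2(M)}=h^{-1}\|v'\|_{L^2(M)}=\mathcal O(1)$, while in $\|h\nabla r\|_{L^2(M)}=\|\nabla(e^{i\psi/h}v')\|_{L^2(M)}$ the factor $h$ of $h\nabla$ cancels the $h^{-1}$ normalization, and the single remaining $h^{-1}$—arising either from $\nabla e^{i\psi/h}$ hitting $v'$ or from writing $\nabla v'=h^{-1}(h\nabla v')$—is balanced by $\|v'\|_{H^1_{\rm scl}(M)}\lesim h$, giving $\|r\|_{H^1_{\rm scl}(M)}=\mathcal O(1)$. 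I expect this last point to be the main obstacle: one must check that the oscillatory prefactor $e^{i\psi/h}$ does not destroy the $\mathcal O(1)$ bound, which works precisely because the extra power of $h$ gained in the source (order $h^{m+1}$) offsets the $h^{-m}$ loss in the solvability estimate with exactly one power of $h$ to spare.
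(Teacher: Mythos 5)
Your proposal is correct and takes essentially the same route as the paper: the same ansatz, the eikonal and transport equations solved via the product structure (with $\psi=r$ and the amplitude $|g|^{-1/4}c^{1/2}a_0 b$ chosen so that two applications of the transport operator annihilate it), and the remainder obtained from Proposition~\ref{Solvability result} with the $h^{m+1}$ source gain beating the $h^{-m}$ solvability loss. In fact you supply two details the paper leaves implicit --- the noncommutative expansion of $(hT-h^2\Delta_g)^m$ isolating $h^mT^m$, and the verification that the oscillatory factor $e^{i\psi/h}$ does not spoil the $\mathcal O(1)$ bound on $\|r\|_{H^1_{\rm scl}(M)}$ --- so no changes are needed.
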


\begin{Remark}\label{remark 1}{\rm
In fact, we need complex geometric optics solutions belonging to $H^{2m}(M)$. Such solutions can be obtained in the following way. Extend $X$ and $q$ smoothly to $\R\times M_0$. By elliptic regularity, the complex geometric optics solutions constructed as above in $M_0$ will belong to $H^{2m}(M)$.
}\end{Remark}

\begin{Remark}\label{remark 2}{\rm
It is easy to check that if $a_0$ depends only on $(x_1,r)$ and satisfies $\overline{\p}a_0=0$, then the equation $\mathcal L_{g,X,q}u=0$ in $M$ has a solution as in Proposition~\ref{existence of CGOs}.
}\end{Remark}

\section{Proof of Theorem~\ref{main th}}\label{Proof}
Let $(M,g)$ be an admissible manifold and let $m\ge 2$ be an integer. The first ingredient in the proof of Theorem~\ref{main th} is a standard reduction to a larger compact manifold with boundary.
\begin{Proposition}\label{extension of Cauchy data set}
Let $M,M_1$ be compact manifolds with boundary such that $M\subset\subset M_1$, and let $m\ge 2$ be an integer. Assume that $X_1,X_2$ are $W^{1,\infty}$ vector fields on $M$ and $q_1,q_2 \in L^\infty(M)$. Suppose that
$$
X_1=X_2,\quad q_1=q_2\quad\text{in}\quad M_1\setminus M.
$$
If $C_{g,X_1,q_1}^M=C_{g,X_2,q_2}^M$, then $C_{g,X_1,q_1}^{M_1}=C_{g,X_2,q_2}^{M_1}$, where $C_{g,X_j,q_j}^{M_1}$ denotes the set of the Cauchy data for $\mathcal L_{g,X_j,q_j}$ in $M_1$, $j=1,2$.
\end{Proposition}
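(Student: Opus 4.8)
The plan is to establish the two inclusions between $C^{M_1}_{g,X_1,q_1}$ and $C^{M_1}_{g,X_2,q_2}$ by a gluing construction; by the symmetry of the hypotheses it suffices to prove $C^{M_1}_{g,X_1,q_1}\subseteq C^{M_1}_{g,X_2,q_2}$. First I would pick an arbitrary $u_1\in H^{2m}(M_1)$ with $\mathcal L_{g,X_1,q_1}u_1=0$ in $M_1$, so that $(\gamma u_1,\widetilde\gamma u_1)|_{\p M_1}$ represents a generic element of $C^{M_1}_{g,X_1,q_1}$. Restricting $u_1$ to the smaller manifold $M$, it still solves $\mathcal L_{g,X_1,q_1}u_1=0$ in $M$, hence its Cauchy data on $\p M$ belongs to $C^M_{g,X_1,q_1}$. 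Using the hypothesis $C^M_{g,X_1,q_1}=C^M_{g,X_2,q_2}$, I obtain $u_2\in H^{2m}(M)$ with $\mathcal L_{g,X_2,q_2}u_2=0$ in $M$ and $\gamma u_2=\gamma u_1|_{\p M}$, $\widetilde\gamma u_2=\widetilde\gamma u_1|_{\p M}$. I then define $\tilde u_2$ on $M_1$ by $\tilde u_2=u_2$ on $M$ and $\tilde u_2=u_1$ on $M_1\setminus M$.

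The heart of the argument is to verify that $\tilde u_2\in H^{2m}(M_1)$. Working in boundary normal coordinates $(t,y)$ near $\p M$, where $t$ is the signed distance to $\p M$, the Laplace-Beltrami operator takes the form $\Delta_g=\p_t^2+b(t,y)\p_t+\Delta_{g_t}$ with $\Delta_{g_t}$ tangential. A straightforward induction then shows that, modulo tangential derivatives of lower-order normal traces, $\Delta_g^j u|_{\p M}$ recovers $\p_t^{2j}u|_{\p M}$ and $\p_\nu\Delta_g^j u|_{\p M}$ recovers $\p_t^{2j+1}u|_{\p M}$. In other words, the passage from the full normal jet $(\p_t^k u|_{\p M})_{k=0}^{2m-1}$ to the Cauchy data $(\gamma u,\widetilde\gamma u)$ is governed by an invertible triangular system. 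Consequently $\gamma u_2=\gamma u_1$ and $\widetilde\gamma u_2=\widetilde\gamma u_1$ on $\p M$ force $\p_t^k u_2|_{\p M}=\p_t^k u_1|_{\p M}$ for $k=0,\dots,2m-1$, where the right-hand traces are taken from inside $M$. Since $u_1\in H^{2m}(M_1)$ has no jump across $\p M$, these interior traces of $u_1$ coincide with those taken from $M_1\setminus M$, so all normal traces of $\tilde u_2$ up to order $2m-1$ match across $\p M$; together with $\tilde u_2\in H^{2m}$ on each side this yields $\tilde u_2\in H^{2m}(M_1)$.

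It remains to check that $\tilde u_2$ is a genuine solution with the correct Cauchy data. On $M^{\rm int}$ one has $\mathcal L_{g,X_2,q_2}\tilde u_2=\mathcal L_{g,X_2,q_2}u_2=0$, while on $(M_1\setminus M)^{\rm int}$ the coefficients of $\mathcal L_{g,X_1,q_1}$ and $\mathcal L_{g,X_2,q_2}$ agree by hypothesis, so $\mathcal L_{g,X_2,q_2}\tilde u_2=\mathcal L_{g,X_1,q_1}u_1=0$. Because $\tilde u_2\in H^{2m}(M_1)$, the function $\mathcal L_{g,X_2,q_2}\tilde u_2$ is a genuine $L^2(M_1)$ function with no distributional part concentrated on $\p M$; vanishing off the measure-zero set $\p M$, it vanishes identically in $M_1$. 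Finally, $\tilde u_2=u_1$ in a neighborhood of $\p M_1$, so $(\gamma\tilde u_2,\widetilde\gamma\tilde u_2)|_{\p M_1}=(\gamma u_1,\widetilde\gamma u_1)|_{\p M_1}$, which places the chosen element of $C^{M_1}_{g,X_1,q_1}$ in $C^{M_1}_{g,X_2,q_2}$. Exchanging the roles of the indices $1$ and $2$ gives the reverse inclusion and hence equality.

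The step I expect to be the main obstacle is the $H^{2m}$-regularity of the glued function $\tilde u_2$: the Dirichlet and Neumann traces $\gamma,\widetilde\gamma$ only encode the data $\Delta_g^j u$ and $\p_\nu\Delta_g^j u$ for $0\le j\le m-1$, not the individual normal derivatives $\p_t^k u$, so one must argue that these $2m$ polyharmonic traces nonetheless determine the entire normal jet of order $2m-1$. Making the triangular reduction precise in boundary normal coordinates, and invoking the standard characterization of $H^{2m}$ across a hypersurface in terms of matching traces, is the technical core of the proof.
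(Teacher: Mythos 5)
Your proof is correct and takes essentially the same route as the paper: restrict the given solution from $M_1$ to $M$, use the equality of the Cauchy data sets on $\partial M$ to produce a matching solution of the second equation, and glue the two pieces together. The paper's own proof merely asserts that the glued function lies in $H^{2m}(M_1)$ and solves the equation there; your triangular-system argument in boundary normal coordinates, showing that the polyharmonic Cauchy data $(\gamma u,\widetilde\gamma u)$ determine the full normal jet of order $2m-1$ so that the traces match across $\partial M$, is exactly the justification the paper leaves implicit.
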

\begin{proof}
Let $u\in H^{2m}(M_1)$ be a solution of $\mathcal{L}_{g,X_1,q_1}u=0$ in $M_1$. Since $C_{g,X_1,q_1}^M=C_{g,X_2,q_2}^M$, there exists $v\in H^{2m}(M)$, solving  $\mathcal{L}_{g,X_2,q_2}v=0$ in $M$, and satisfying $\gamma v=\gamma u$ in $\p M$ and $\tilde \gamma v=\tilde \gamma u$ in $\p M$.  Setting
$$
v_1=\begin{cases} v & \text{in }M,\\
u & \text{in }M_1\setminus M,
\end{cases}
$$
we get $v_1\in H^{2m}(M_1)$ and $\mathcal{L}_{g,X_2,q_2}v_1=0$ in $M_1$. Thus, $C_{g,X_1,q_1}^{M_1}\subset C_{g,X_2,q_2}^{M_1}$. Exactly the same way but in the other direction finishes the proof.
\end{proof}

The second ingredient is the derivation of the following integral identity based on the assumption that $C_{g,X_1,q_1}^M=C_{g,X_2,q_2}^M$.
\begin{Proposition}\label{main int identity prop}
Let $(M,g)$ be a compact Riemannian manifold with boundary, and let $m\ge 2$ be an integer. Assume that $X_1,X_2$ are $W^{1,\infty}$ vector fields on $M$ and $q_1,q_2 \in L^\infty(M)$. If $C_{g,X_1,q_1}=C_{g,X_2,q_2}$, then
$$
\int_M [\<X_1-X_2,v\nabla u\>_g+(q_1-q_2)uv]\,d\Vol_g(x)=0,
$$
for any $u,v\in H^{2m}(M)$ satisfying $\mathcal L_{g,-X_1,-\div_g X_1+q_1}v=0$ and $\mathcal L_{g,X_2,q_2}u=0$ in $M$.
\end{Proposition}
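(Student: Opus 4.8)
The plan is to exploit the equality $C_{g,X_1,q_1}=C_{g,X_2,q_2}$ to manufacture a function with vanishing Cauchy data, and then integrate it against the adjoint solution $v$ through a Green formula for the perturbed polyharmonic operator. Fix $u,v\in H^{2m}(M)$ with $\mathcal L_{g,X_2,q_2}u=0$ and $\mathcal L_{g,-X_1,-\div_g X_1+q_1}v=0$ in $M$. The Cauchy data $(\gamma u,\widetilde\gamma u)$ belongs to $C_{g,X_2,q_2}=C_{g,X_1,q_1}$, so there is $u_1\in H^{2m}(M)$ solving $\mathcal L_{g,X_1,q_1}u_1=0$ in $M$ with $(\gamma u_1,\widetilde\gamma u_1)=(\gamma u,\widetilde\gamma u)$. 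Setting $w:=u-u_1\in H^{2m}(M)$, the full Cauchy data of $w$ vanish: $\gamma w=0$ and $\widetilde\gamma w=0$.

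Next I would identify the source term produced by $w$. Since $\mathcal L_{g,X_1,q_1}u_1=0$ and $\mathcal L_{g,X_2,q_2}u=0$, the polyharmonic principal parts cancel and
$$\mathcal L_{g,X_1,q_1}w=\mathcal L_{g,X_1,q_1}u=\mathcal L_{g,X_1,q_1}u-\mathcal L_{g,X_2,q_2}u=\<X_1-X_2,\nabla u\>_g+(q_1-q_2)u .$$

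The core of the argument is to pair this with $v$ via the bilinear Green formula for $\mathcal L_{g,X_1,q_1}$. Recall that $\mathcal L_{g,-X_1,-\div_g X_1+q_1}$ is the formal transpose of $\mathcal L_{g,X_1,q_1}$ with respect to $\int_M(\cdot)(\cdot)\,d\Vol_g$: the operator $(-\Delta_g)^m$ is self-transpose, multiplication by $q_1$ is symmetric, and one integration by parts gives that the transpose of $w\mapsto\<X_1,\nabla w\>_g$ is $v\mapsto-\<X_1,\nabla v\>_g-(\div_g X_1)v$, with boundary contribution $\int_{\p M}\<X_1,\nu\>_g\,wv\,dS$. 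I would thus write
$$\int_M(\mathcal L_{g,X_1,q_1}w)\,v\,d\Vol_g-\int_M w\,(\mathcal L_{g,-X_1,-\div_g X_1+q_1}v)\,d\Vol_g=\mathcal B(w,v),$$
where $\mathcal B(w,v)$ collects the boundary terms. After iterating the Green formula for $-\Delta_g$, the polyharmonic boundary contributions are assembled from the traces $\Delta_g^j w|_{\p M}$ and $\p_\nu\Delta_g^j w|_{\p M}$ for $0\le j\le m-1$, i.e. precisely the components of $\gamma w$ and $\widetilde\gamma w$, while the first order term contributes only $\int_{\p M}\<X_1,\nu\>_g\,wv\,dS$, which involves $w|_{\p M}$. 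Hence $\gamma w=\widetilde\gamma w=0$ forces $\mathcal B(w,v)=0$.

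Finally, using $\mathcal L_{g,-X_1,-\div_g X_1+q_1}v=0$ and $\mathcal B(w,v)=0$, the Green formula collapses to $\int_M(\mathcal L_{g,X_1,q_1}w)\,v\,d\Vol_g=0$, and substituting the source term yields the asserted identity. The only genuinely delicate point is the bookkeeping in the polyharmonic Green formula: one must check that each boundary term pairs a trace from $\{\gamma w,\widetilde\gamma w\}$ against some trace of $v$, so that the vanishing of $\gamma w$ and $\widetilde\gamma w$ alone annihilates all of $\mathcal B(w,v)$, with no hypothesis imposed on $v$ at $\p M$. This is where I expect to spend the most care, although it is routine given the definitions of the Dirichlet and Neumann traces $\gamma$ and $\widetilde\gamma$.
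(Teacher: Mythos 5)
Your proposal is correct and is essentially the paper's own proof: the paper likewise uses $C_{g,X_1,q_1}=C_{g,X_2,q_2}$ to produce $\tilde u$ solving $\mathcal L_{g,X_1,q_1}\tilde u=0$ with the same Cauchy data as $u$, computes $\mathcal L_{g,X_1,q_1}(u-\tilde u)=\<X_1-X_2,\nabla u\>_g+(q_1-q_2)u$ (the paper's display writes $\tilde u$ in this source term, a typo for $u$, which you have correctly), and then pairs against $v$ via the Green formula. If anything, your boundary bookkeeping is the more precise justification: the Green identity the paper cites is stated for $\gamma u=\gamma v=0$, while the version actually needed---and the one you verify---is that every boundary term contains a trace from $\{\gamma w,\widetilde\gamma w\}$, so the vanishing of the full Cauchy data of $w=u-\tilde u$ alone annihilates them, with no boundary condition imposed on $v$.
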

\begin{proof}
We will use the following consequence of the Green's formula, see \cite{Grubb},
\begin{equation}\label{Green}
(\mathcal L_{g,X_1,q_1}u,v)_{L^2(M)}=(u,\mathcal L_{g,X_1,q_1}^*v)_{L^2(M)}
\end{equation}
for all $u,v\in H^{2m}(M)$ such that $\gamma u=\gamma v=0$, where $\mathcal L_{g,X_1,q_1}^*=\mathcal L_{g,-X_1,-\div_g X_1+q_1}$.
\\

Now, let $u,v\in H^{2m}(M)$ be such that $\mathcal L_{g,-X_1,-\div_g X_1+q_1}v=0$ and $\mathcal L_{g,X_2,q_2}u=0$ in $M$. The hypothesis that $\mathcal N_{g,X_1,q_1}=\mathcal N_{g,X_2,q_2}$ implies the existence of $\tilde u\in H^{2m}(M)$ such that $\mathcal L_{g,X_1,q_1}\tilde u=0$ and $\gamma \tilde u=\gamma u$, $\widetilde{\gamma}\tilde u=\widetilde{\gamma}u$. We have
$$
\mathcal L_{g,X_1,q_1}(u-\tilde u)=(X_1-X_2)\tilde u+(q_1-q_2)\tilde u.
$$
Using \eqref{Green}, this implies the result.
\end{proof}

According to hypothesis, that $X_1=X_2$ in $(\R\times M_0)\setminus M^{\rm int}$. We also extend $q_1$ and $q_2$ to $\R\times M_0$ by zero outside $M^{\rm int}$. Let, as in Section~\ref{section on CGOs}, $(D,g_0)\subset\subset (M_0^{\rm int},g_0)$ be simple such that $(M,g)\subset\subset (\R\times D^{\rm int},g)\subset (\R\times M_0^{\rm int},g)$. Let $(M_1,g)$ be also admissible and simply connected such that $(M,g)\subset \subset (M_1^{\rm int},g)$ and $(M_1,g)\subset\subset (\R\times D^{\rm int},g)$. According to Proposition~\ref{extension of Cauchy data set}, we know that $C_{g,X_1,q_1}^{M_1}=C_{g,X_2,q_2}^{M_1}$ is true.

According to Proposition~\ref{main int identity prop} the following integral identity holds for all $u,v\in H^{2m}(M_1)$ satisfying $\mathcal L_{g,X_2,q_2}u=0$ and $\mathcal L_{g,-X_1,-\div_g X_1+q_1}v=0$ in $M_1$, respectively:
\begin{equation}\label{main int identity}
\int_{M_1} [\<X_1-X_2,v\nabla u\>_g+(q_1-q_2)uv]\,d\Vol_g(x)=0.
\end{equation}

The main idea of the proof of Theorem~\ref{main th} is to use the integral identity \eqref{main int identity} with $u,v\in H^{2m}(M)$ being complex geometric optics solutions for the equations $\mathcal L_{g,X_2,q_2}u=0$ and $\mathcal L_{g,-X_1,-\div_g X_1+q_1}v=0$ in $M_1$, respectively. We use Proposition~\ref{existence of CGOs}, Remark~\ref{remark 1} and Remark~\ref{remark 2} to choose solutions of the form
%\HOX{I think $r_1$ and $r_2$ should be replaced by $hr_1$ and $hr_2$ according to Proposition 3.1.}
\begin{align*} 
u&=e^{-\frac{1}{h}(x_1+ir)}(|g|^{-1/4}c^{1/2}e^{i\lambda(x_1+ir)}b(\theta)+hr_1),\\
v&=e^{\frac{1}{h}(x_1+ir)}(|g|^{-1/4}c^{1/2}+hr_2),
\end{align*}
where $\lambda\in\R$ and $\|r_j\|_{H^1_{\rm scl}(M_1)}=\mathcal O(1)$, $j=1,2$. Substituting these solutions in \eqref{main int identity}, multiplying the resulting equality by $h$ and letting $h\to 0$, we get
$$
\lim_{h\to 0}\int_{M_1}\<X_1-X_2,\nabla \rho\>_g\,uv\,d\Vol_g(x)=0,
$$
where $\rho=x_1+ir$. Let us rewrite the integral in $(x_1,r,\theta)$ coordinates. Write $X=X_1-X_2$, and let $X^{\flat}$ be a $1$-form dual to $X$. Let $X_{x_1}^\flat$ and $X_r^\flat$ denote the components of $X^\flat$ in the $x_1$ and $r$ coordinates. Then
\begin{equation} \label{integral equals zero}
\int_\R\int_{M_{1,x_1}}(X_{x_1}^\flat+iX_r^\flat)e^{i\lambda(x_1+ir)}b(\theta)\,dr\,d\theta\,dx_1=0,
\end{equation}
where $M_{1,x_1}=\{(r,\theta):(x_1,r,\theta)\in M_1\}$. Since $X_1=X_2$ in $(\R\times M_0)\setminus M^{\rm int}$, we may assume that the integral is over $\R\times D$. Taking $x_1$-integral inside gives
$$
\int_{S^{n-2}}\int e^{-\lambda r}\(\int_\R e^{i\lambda x_1}(X_{x_1}^\flat+iX_r^\flat)(x_1,r,\theta)\,dx_1\)\,dr\,d\theta=0.
$$
Define
$$
f(x')=\int_\R e^{i\lambda x_1} X_{x_1}^\flat(x_1,x')\,dx_1,\quad \alpha(x')=\sum_{j=2}^n\(\int_\R e^{i\lambda x_1} X_j^\flat(x_1,x')\,dx_1\)\,dx^j.
$$
Then $f\in W^{1,\infty}(D)$ and $\alpha$ is a $1$-form which is $W^{1,\infty}$ on $D$, and the integral identity above can be rewritten as
$$
\int_{S^{n-2}}\int e^{-\lambda r}[f(\gamma_{w,\theta}(r))+i\alpha(\dot\gamma_{w,\theta}(r))]\,dr\,d\theta=0,
$$
where $\gamma_{w,\theta}$ is a geodesic in $(D,g_0)$ issued from the point $\omega$ in the direction $\theta$. For $\omega\in \p D$, the integral above is related to the attenuated ray transform of function $f$ and $1$-form $i\alpha$ in $D$ with constant attenuation $-\lambda$. Therefore, by varying the point $\omega$ in Proposition~\ref{existence of CGOs} on $\p D$ and using Proposition~\ref{ray transform} in Section~\ref{Attenuated ray transform}, for small enough $\lambda$, we have $f=-\lambda p$ and $\alpha=-idp$ where $p\in W^{1,\infty}(D)$ and $p|_{\p D}=0$. The definition of $\alpha$ and analyticity of the Fourier transform imply that 
$$
\partial_k X^\flat_j - \partial_j X^\flat_k = 0, \quad j,k = 2,\ldots,n.
$$
Also 
$$
\int e^{i\lambda x_1} (\partial_j X^\flat_1 - \partial_1 X^\flat_j)(x_1,x') \,dx_1 = \partial_j f + i\lambda \alpha_j = 0,
$$
showing that $dX^\flat=0$ in $M_1$. Since $M_1$ is simply connected, there is $\phi\in W^{2,\infty}(M_1)$ such that $\phi|_{\p M_1}=0$ and $X=\nabla \phi$.

Since $X=X_1-X_2$ in the neighborhood of the boundary $\p M_1$, we conclude that $\phi$ is a constant, say $c\in \C$, on $\p M_1$. Therefore, considering $\phi-c$, we may and will assume that $\phi=0$ on $\p M_1$. Since $X_1=X_2$ in $(\R\times M_0)\setminus M^{\rm int}$, we also may and shall assume that $\phi$ is zero outside $M_1$. In particular, $\phi$ is compactly supported.

Next, we show that $X_1=X_2$. For this, using Proposition~\ref{existence of CGOs} and Remark~\ref{remark 1}, consider
\begin{align*}
u&=e^{-\frac{1}{h}(x_1+ir)}(|g|^{-1/4}c^{1/2}e^{i\lambda(x_1+ir)}b(\theta)+hr_1),\\
v&=e^{\frac{1}{h}(x_1+ir)}(|g|^{-1/4}c^{1/2}a_0+hr_2),
\end{align*}
where $a_0$ satisfies $\overline{\p}a_0=c$. Such $a_0$ can be constructed using Cauchy's integral formula in \cite{FaK} as
$$
a_0(x_1,r,\theta)=a_0(\rho,\theta)=\frac{1}{2\pi}\int_{\mathcal B}\frac{c(z,\theta)}{z-\rho}\,dz\wedge\,d\overline{z},\quad\text{for all }\theta\in S^{n-2},
$$
where $\rho=x_1+ir$, $\mathcal B$ is a bounded domain in the upper half plane $\H\subset \C$ such that the map $\mathcal B\times S^{n-2}\to \R\times M_0$, $(x_1,r,\theta)\mapsto (x_1,\exp_{\omega}^D(r \theta))$ covers $M_1$ and the boundary $\p\mathcal B$ is piecewise smooth. Here and in what follows, $\omega\in D$ such that $\omega\in M_1$ in Proposition~\ref{existence of CGOs}.

Substituting these solutions and $X_1-X_2=\nabla \phi$ in \eqref{main int identity}, multiplying the resulting equality by $h$ and letting $h\to 0$, we get
$$
\lim_{h\to 0}\int_{M_1}\<\nabla \phi,\nabla \rho\>_g\,uv\,d\Vol_g(x)=0,
$$
where $\rho=x_1+ir$. Rewriting the integral in $(x_1,r,\theta)$ coordinates and taking $x_1$-integral inside, we obtain
$$
2\int_{S^{n-2}}\(\int_0^\infty\int_\R\overline{\p}\phi\, a_0 e^{i\lambda(x_1+ir)}b(\theta)\,dx_1\,dr\)\,d\theta=0.
$$
Since $\phi$ is compactly supported, integrating by parts, in $(x_1,r)$, gives
\begin{equation}\label{pre attenuated ray}
\begin{aligned}
0&=-\int_{S^{n-2}}\(\int_0^\infty\int_{\R}\overline{\p}\phi\, a_0 e^{i\lambda(x_1+ir)}b(\theta)\,dx_1\,dr\)\,d\theta\\
&=\int_{S^{n-2}}\(\int_0^\infty\int_{\R}\phi\,\overline{\p} a_0 e^{i\lambda(x_1+ir)}b(\theta)\,dx_1\,dr\)\,d\theta\\
&=\int_{S^{n-2}}\(\int_0^\infty\int_{\R}\phi c\, e^{i\lambda(x_1+ir)}b(\theta)\,dx_1\,dr\)\,d\theta.
\end{aligned}
\end{equation}
Set
$$
\Phi_\lambda(r,\theta)=\int_{\R}\phi c\, e^{i\lambda x_1}\,dx_1,
$$
i.e. $\Phi_\lambda$ is the Fourier transform of $\phi c$ in $x_1$-variable. Then \eqref{pre attenuated ray} can be written as
$$
\int_{S^{n-2}}\int e^{-\lambda r}\Phi_\lambda(\gamma_{\omega,\theta}(r))b(\theta)\,dr\,d\theta=0.
$$
By varying the point $\omega$ in Proposition~\ref{existence of CGOs} on $\p D$ and using \cite[Lemma~5.1]{DKS}, for small enough $\lambda$, we have $\Phi_\lambda= 0$. Since $\phi c$ is compactly supported, its Fourier transform $\Phi_\lambda$ is analytic. Therefore we obtain, $\phi =0$ which shows that $X_1=X_2$.

To show that $q_1=q_2$, consider \eqref{main int identity} with $X_1=X_2$ which becomes
\begin{equation}\label{last main int identity}
\int_{M_1} (q_1-q_2)uv\,d\Vol_g(x)=0
\end{equation}
holds for all $u,v\in H^{2m}(M_1)$ satisfying $\mathcal L_{g,X_2,q_2}u=0$ and $\mathcal L_{g,-X_1,-\div_g X_1+q_1}v=0$ in $M_1$, respectively. Use Proposition~\ref{existence of CGOs}, Remark~\ref{remark 1} and Remark~\ref{remark 2} to choose solutions of the form
\begin{align*}
u&=e^{-\frac{1}{h}(x_1+ir)}(|g|^{-1/4}c^{1/2}e^{i\lambda(x_1+ir)}b(\theta)+hr_1),\\
v&=e^{\frac{1}{h}(x_1+ir)}(|g|^{-1/4}c^{1/2}+hr_2),
\end{align*}
where $\lambda\in\R$ and $\|r_j\|_{H^1_{\rm scl}(M_1)}=\mathcal O(1)$, $j=1,2$. Substituting these solutions in \eqref{last main int identity} and letting $h\to 0$, we get
$$
\int_\R \int_{M_{1,x_1}} e^{i\lambda(x_1+ir)}(q_1-q_2)c(x_1,r,\theta)b(\theta)\,dr\,d\theta\,dx_1=0.
$$
Taking $x_1$-integral inside and varying $b$ gives
$$
\int_{S^{n-2}}\int_0^\infty \int_\R e^{i\lambda(x_1+ir)}(q_1-q_2)c(x_1,r,\theta)\,dx_1\,dr\,d\theta=0.
$$
Set
$$
Q_\lambda(r,\theta)=\int_{\R}(q_1-q_2) c\, e^{i\lambda x_1}\,dx_1,
$$
i.e. $Q_\lambda$ is the Fourier transform of $(q_1-q_2) c$ in $x_1$-variable. Then, as in the case of $\Phi_\lambda$, one can show that $Q_\lambda=0$ for all $\lambda$ small enough. We have extended $q_1$ and $q_2$ to $\R\times M_0$ by zero outside $M^{\rm }$, which implies that $q_1-q_2$ is compactly supported. Hence, $Q_\lambda$ is analytic. This together with $Q_\lambda=0$ for all $\lambda$ small enough, allows us to conclude that $q_1=q_2$.

\section{Attenuated ray transform}\label{Attenuated ray transform}
The aim of this section is to prove the following proposition which was used in the proof of Theorem~\ref{main th}. We will closely follow the arguments in \cite{DKS}.

%\HOX{In the proof of this proposition, did you implicitely assume that $M\subset \mathbb{R}\times D^{int}$? This is not explicitly stated in the proposition but it seems from the proof of this proposition that you need it, is this correct? (This is an important point because this is related to whether the proof of Theorem \ref{main th3} is valid, in that proof I used this proposition and took $D=\pi(M)$ where $\pi:\mathbb{R}\times M_0\rightarrow M_0$ is the canonical projection $\pi(x_1,x')=x'$. If in this proposition we do need $M\subset \mathbb{R}\times D^{int}$, then we cannot take $D=\pi(M)$ and the proof of Theorem \ref{main th3} will fail.)}
\begin{Proposition}\label{ray transform}
Let $(D,g_0)$ be an $(n-1)$-dimensional simple manifold. Let $f\in L^\infty(D)$ and $\alpha$ be a $1$-form which is $L^\infty$ on $D$. Consider the integrals
$$
\int_{S^{n-2}}\int_0^{\tau(\omega,\theta)}[f(\gamma_{\omega,\theta}(r))+\alpha_k(\gamma_{\omega,\theta}(r))\dot\gamma_{\omega,\theta}^k(r)]e^{-\lambda r}b(\theta)\,dr\,d\theta,
$$
where $(r,\theta)$ are polar normal coordinates in $(D,g_0)$ centered at some $\omega\in\p D$, and $\tau(\omega,\theta)$ is the time when the geodesic $r\mapsto (r,\theta)$ exits $D$. If $|\lambda|$ is sufficiently small, and if these integrals vanish for all $\omega\in \p D$ and all $b\in C^\infty(S^{n-2})$, then there is $p\in W^{1,\infty}(D)$ with $p|_{\p D}=0$ such that $f=-\lambda p$ and $\alpha=d p$.
\end{Proposition}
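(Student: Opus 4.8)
The plan is to pass from the stated scalar integrals to a statement about the attenuated ray transform and then invoke the energy (Pestov) method on the simple manifold $(D,g_0)$. First I would observe that, since $b\in C^\infty(S^{n-2})$ is arbitrary, the vanishing of the integrals forces the inner $r$-integral $\int_0^{\tau(\omega,\theta)}[f+\alpha_k\dot\gamma^k]e^{-\lambda r}\,dr$ to vanish for a.e.\ $\theta$ and every $\omega\in\p D$. Because $(D,g_0)$ is simple, $\p D$ is strictly convex and there are no conjugate points, so every maximal geodesic of $D$ meets $\p D$ and is captured by some $(\omega,\theta)$ with $\omega\in\p D$. Thus the hypothesis is exactly the vanishing of the attenuated geodesic ray transform of the pair $(f,\alpha)$ along all maximal geodesics.

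Next I would lift to the unit sphere bundle $SD$ and let $\mathcal X$ denote the geodesic vector field. Define $u$ on $SD$ to be the forward integral of the source $(x,v)\mapsto f(x)+\alpha_x(v)$ against the weight $e^{-\lambda t}$ up to the exit time; a direct computation shows $u$ solves the attenuated transport equation $\mathcal X u-\lambda u=-(f+\alpha)$, vanishes on the outgoing boundary by construction, and vanishes on the incoming boundary precisely because the ray transform vanishes. Hence $u|_{\p(SD)}=0$. Working first with smooth $f,\alpha$, I would decompose $u=\sum_{m\ge0}u_m$ into its fibrewise spherical-harmonic components and split $\mathcal X=\mathcal X_++\mathcal X_-$ into its degree-raising and degree-lowering parts. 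Since $f+\alpha$ has only the degree-$0$ part $f$ and the degree-$1$ part $\alpha$, matching degrees in the transport equation gives a system coupling neighbouring modes through $\mathcal X_\pm$ together with the attenuation term $-\lambda u_m$.

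The heart of the proof is to show that $u_m=0$ for all $m\ge1$, i.e.\ that $u$ reduces to a function $p=p(x)$ on the base. For this I would use the Pestov identity for the simple manifold $(D,g_0)$ adapted to the operator $\mathcal X-\lambda$: on a simple manifold the unattenuated identity is coercive and suppresses the higher modes, while the attenuation contributes an indefinite quadratic perturbation of size $\mathcal O(|\lambda|)$. For $|\lambda|$ sufficiently small this perturbation is absorbed, and one concludes that all components of degree $\ge1$ vanish. Once $u=p$ depends only on $x$, the boundary condition $u|_{\p(SD)}=0$ yields $p|_{\p D}=0$, while the degree-$0$ and degree-$1$ parts of $\mathcal X u-\lambda u=-(f+\alpha)$ read off as $-\lambda p=-f$ and $dp=-\alpha$; after replacing $p$ by $-p$ this is exactly $f=-\lambda p$ and $\alpha=dp$. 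Finally, for $f,\alpha\in L^\infty$ I would approximate by smooth data, apply the smooth case with uniform estimates, and pass to the limit; since $\alpha=dp\in L^\infty$, the resulting potential lies in $W^{1,\infty}(D)$.

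The step I expect to be the main obstacle is the mode-reduction argument. Because $-\lambda u$ is not of low fibrewise degree, the clean mechanism available at $\lambda=0$ (where $\mathcal X u$ of degree $\le1$ forces $u$ of degree $\le0$ on a simple manifold) does not apply verbatim; quantifying how small $|\lambda|$ must be so that the attenuated Pestov identity stays coercive---dominating the indefinite attenuation term while retaining the gain coming from strict convexity of $\p D$ and absence of conjugate points---is the technical crux. The low regularity of $f$ and $\alpha$ is a secondary, routine difficulty handled by the approximation described above.
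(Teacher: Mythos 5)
Your reduction to the attenuated transport equation $\mathcal X u-\lambda u=-(f+\alpha)$ on $SD$ with $u|_{\partial(SD)}=0$ is correct, and the mode-reduction strategy is a legitimate known route to the smooth-case injectivity. But as written your proposal has two genuine gaps, and they sit exactly at the two places where the paper does real work. First, the coercivity of the attenuated Pestov identity for small $|\lambda|$ --- which you yourself flag as ``the technical crux'' --- is never established. That estimate \emph{is} the content of the smooth injectivity statement; the paper does not prove it either, but it does not need to, because it invokes it as a black box (Proposition~\ref{inj of attenuated smooth case}, i.e.\ \cite[Theorem~7.1]{DKSU}). A proof whose central analytic step is deferred to ``for $|\lambda|$ sufficiently small this perturbation is absorbed'', without the Poincar\'e-type inequality on $SD$ needed to absorb an attenuation term that couples \emph{all} fibrewise spherical-harmonic modes, is incomplete at its heart; the honest fix is simply to cite the known theorem, as the paper does.

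Second, and more seriously, your treatment of the $L^\infty$ regularity --- ``approximate by smooth data, apply the smooth case with uniform estimates, and pass to the limit'', described as routine --- fails as stated. Mollifying $f$ and $\alpha$ destroys the hypothesis: the attenuated ray transform of the mollified pair no longer vanishes, since mollification does not commute with integration along geodesics, so there is no vanishing-transform ``smooth case'' to apply to the approximants. What one would need instead is a quantitative stability estimate for the solenoidal part in terms of the transform, and the Pestov argument you sketch is purely qualitative (injectivity only), so it supplies no such uniform estimate with which to pass to the limit. This is precisely the difficulty the paper isolates (``it is not obvious how to do this when $f$ and $\alpha$ are $L^\infty$'') and resolves by a different mechanism: decompose $\alpha=\alpha^s+dp$ solenoidally on a slightly smaller simple manifold, use duality (the choice $b=h\mu$ plus Santal\'o's formula, Lemma~\ref{adjoint of T_lambda}) to conclude $T_\lambda^*T_\lambda[f+\lambda p,\alpha^s]=0$, and then invoke ellipticity of the normal operator $T_\lambda^*T_\lambda$ as a pseudodifferential operator of order $-1$ (\cite[Proposition~1]{HS}) to bootstrap $f+\lambda p$ and $\alpha^s$ to $C^\infty$; only after this regularity upgrade does the smooth result apply. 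Without either that ellipticity argument or a proved stability estimate, your limiting argument does not close, and the final assertion that $p\in W^{1,\infty}(D)$ (which in the paper falls out of $dp=\alpha-\alpha^s$ with $\alpha\in L^\infty$ and $\alpha^s$ smooth) is likewise left unjustified.
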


This is related to the injectivity of attenuated ray transform acting on function and $1$-form on $D$. Let us introduce some notions and facts; see \cite{Shar} for more details. By $SD$ we will denote its unit sphere bundle $SD:=\{(x,v)\in TD: |v|_{g_0(x)}=1\}$. On the boundary of $D$, we consider the set of inward and outward unit vectors defined as
\begin{align*}
\p_+ SD&=\{(x,v)\in SD:x\in\p D,\langle v,\nu(x)\rangle_{g_0(x)}\le0\},\\
\p_- SD&=\{(x,v)\in SD:x\in\p D,\langle v,\nu(x)\rangle_{g_0(x)}\ge0\},
\end{align*}
where $\nu$ is the unit outer normal to $\p D$. The geodesics entering $D$ can be parameterized by $\p_+SD$. For any $(x,v)\in SD$ the first non-negative exit time of the geodesic $\gamma_{x,v}$, with $x=\gamma_{x,v}(0)$, $v=\dot\gamma_{x,v}(0)$, will be denoted as $\tau(x,v)$. Simplicity assumption guarantees that $\tau(x,v)$ is finite for all $(x,v)\in SD$. We also
write $\phi_t(x,v)=(\gamma_{x,v}(t),\dot\gamma_{x,v}(t))$ for the geodesic flow.

We endow the unit sphere bundle $SD$ with its usual Liouville (local product) measure $d\Sigma^{2n-3}$, and endow the bundle $\p_+SD$ with its standard measure $d\Sigma^{2n-4}$. By $d\sigma_x$ we denote the measure on $S_x D$.

Let $f$ be a function and $\alpha$ be a $1$-form on $D$. The geodesic ray transform of $f$ and $\alpha$, with constant attenuation $-\lambda$, is defined as
$$
T_\lambda[f,\alpha](x,v)=\int_0^{\tau(x,v)} [f(\gamma_{x,v}(t))+\alpha_k(\gamma_{x,v}(t))\dot\gamma_{x,v}^k(t)]e^{-\lambda t}\,dt,\quad (x,v)\in \p_+ SD.
$$

In Propostion~\ref{ray transform}, if $f$ and $\alpha$ were a continuous function and $1$-form, respectively, one could choose $b(\theta)$ to approximate a delta function at fixed angles $\theta$ and obtain that
$$
\int_0^{\tau(\omega,\theta)}[f(\gamma_{\omega,\theta}(r))+\alpha_k(\gamma_{\omega,\theta}(r))\dot\gamma_{\omega,\theta}^k(r)]e^{-\lambda r}\,dr=0
$$
for all $\omega\in \p D$ and all $\theta\in S^{n-2}$. This would imply that
%\HOX{Here should $\p_+ SM$ be $\p_+ SD$?}
$$
T_\lambda[f,\alpha](x,v)=0\quad \text{for all }(x,v)\in \p_+SD.
$$
We will use the following result from \cite[Theorem~7.1]{DKSU}.
\begin{Proposition}\label{inj of attenuated smooth case}
Let $(D,g_0)$ be a compact simple manifold with smooth boundary. There exists $\varepsilon>0$ such that the following assertion holds for a real number $\lambda$ with $|\lambda|<\varepsilon$: If $f\in C^\infty(D)$ and $\alpha$ be a smooth $1$-form on $D$, then $T_\lambda[f,\alpha](x,v)=0$ for all $(x,v)\in \p_+ SD$ implies the existence of $p\in C^\infty(D)$ with $p|_{\p D}=0$ such that $f=-\lambda p$ and $\alpha=d p$.
\end{Proposition}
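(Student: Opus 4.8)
The plan is to convert the vanishing of the attenuated ray transform into a transport equation on the unit sphere bundle $SD$, settle the unattenuated case $\lambda=0$ by the Pestov energy method, and then reach small $\lambda\ne0$ by a perturbation argument; a fibrewise-constant solution then produces the potential $p$.

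First I would let $X$ be the geodesic vector field on $SD$ and form $a(x,v)=f(x)+\alpha_k(x)v^k$, the sum of a fibrewise degree-$0$ and degree-$1$ function on $SD$. Setting
$$
u(x,v)=\int_0^{\tau(x,v)}a(\phi_t(x,v))\,e^{-\lambda t}\,dt ,
$$
differentiation along the flow gives the transport equation $Xu-\lambda u=-a$ on $SD$. The hypothesis $T_\lambda[f,\alpha]\equiv0$ on $\p_+SD$ forces $u=0$ on $\p_+SD$, and $\tau\equiv0$ on $\p_-SD$ forces $u=0$ there as well, so $u|_{\p SD}=0$. Since $(D,g_0)$ is simple and the data are smooth, the vanishing of the transform removes the glancing singularities, so $u\in C^\infty(SD)$ by the standard regularity lemma. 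The target is to prove that $u=u(x)$ is independent of $v$: writing $u=\sum_{k\ge0}u_k$ for the vertical spherical-harmonic decomposition on the fibres $S^{n-2}$, this amounts to $u_k=0$ for all $k\ge1$.

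When $\lambda=0$ the equation is $Xu=-a$, and reversing geodesics sends $v\mapsto-v$, which is even on the degree-$0$ part $f$ and odd on the degree-$1$ part $\alpha$; summing and differencing the identity for a geodesic and its reversal separates the two, reducing the problem to the vanishing of the ordinary geodesic ray transform of $f$ and of $\alpha$ separately. On a simple manifold these are injective up to the natural gauge, namely $f=0$ and $\alpha=dp$ with $p|_{\p D}=0$, by the Pestov identity together with the positivity of the index form coming from the absence of conjugate points (see \cite{Shar}). Schematically, for $w\in C^\infty(SD)$ with $w|_{\p SD}=0$,
$$
\|\nabla^v Xw\|_{L^2(SD)}^2=\|X\nabla^v w\|_{L^2(SD)}^2-(R\,\nabla^v w,\nabla^v w)_{L^2(SD)}+(n-2)\|Xw\|_{L^2(SD)}^2 ,
$$
and simplicity makes $\|X\nabla^v w\|^2-(R\nabla^v w,\nabla^v w)\ge0$, which drives the solenoidal injectivity.

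The main obstacle is the attenuated case $\lambda\ne0$: the factor $e^{-\lambda t}$ destroys the reversal parity and couples $f$ with $\alpha$, so the clean $\lambda=0$ splitting is no longer available. Here I would treat $T_\lambda$ as an analytic perturbation of $T_0$: the associated normal operator depends analytically on $\lambda$ and is, after gauge fixing to solenoidal data, elliptic with the injectivity established above at $\lambda=0$; a Fredholm and continuity argument then preserves injectivity for $|\lambda|<\varepsilon$, which is exactly where the smallness of $\lambda$ enters. Equivalently, in the energy identity above one substitutes $Xu=\lambda u-a$ and absorbs the resulting $O(\lambda)$ terms into the curvature-positive terms, which is possible below a threshold fixed by the geometry of $(D,g_0)$; either way the conclusion is $u_k=0$ for $k\ge1$. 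Finally, setting $p=-u\in C^\infty(D)$, the boundary condition $u|_{\p SD}=0$ gives $p|_{\p D}=0$, and matching the degree-$0$ and degree-$1$ parts of $Xu-\lambda u=-a$ yields $f=-\lambda p$ and $\alpha=dp$, completing the proof.
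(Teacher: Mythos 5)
Your proposal cannot be compared line-by-line with a proof in the paper, because the paper does not prove this proposition at all: it is quoted verbatim from \cite[Theorem~7.1]{DKSU}, and the authors use it as a black box inside the proof of Proposition~\ref{ray transform}. That said, your outline reproduces the standard strategy behind the cited result: integrate the data into $u(x,v)=\int_0^{\tau(x,v)}a(\phi_t(x,v))e^{-\lambda t}\,dt$ so that $Xu-\lambda u=-a$ with $u|_{\partial SD}=0$, dispose of $\lambda=0$ by the parity trick (reversing geodesics splits the transform of $f$ from that of $\alpha$) together with solenoidal injectivity on simple manifolds, and then perturb in $\lambda$. The transport identity, the boundary vanishing of $u$, the regularity lemma you invoke, and the final bookkeeping (matching the fibrewise degree-$0$ and degree-$1$ parts of $Xu-\lambda u=-a$ to get $f=-\lambda p$, $\alpha=dp$, $p|_{\partial D}=0$) are all correct.

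The genuine gap is the step where $\varepsilon$ is supposed to come from: ``a Fredholm and continuity argument then preserves injectivity for $|\lambda|<\varepsilon$'' is not a proof, because injectivity is not an open property of an analytic operator family. For instance, $A_\lambda=A_0-\lambda\,\mathrm{Id}$ on $\ell^2$ with $A_0=\mathrm{diag}(1,\tfrac12,\tfrac13,\dots)$ is analytic in $\lambda$ and injective at $\lambda=0$, yet has nontrivial kernel for $\lambda=1/k$ arbitrarily small. Two ingredients are needed to close this. First, gauge fixing that tracks $\lambda$: the kernel of $T_\lambda$ is $\{[-\lambda p,dp]:p|_{\partial D}=0\}$, which moves with $\lambda$, so one must first integrate by parts to replace $[f,\alpha]$ with $[f+\lambda p,\alpha^s]$, $\delta_{g_0}\alpha^s=0$ --- exactly the reduction the paper itself performs in \eqref{some identity} in the proof of Proposition~\ref{ray transform} --- and then prove injectivity of $T_\lambda$ on solenoidal pairs. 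Second, a quantitative stability estimate at $\lambda=0$, of Stefanov--Uhlmann type, say $\|[f,\alpha^s]\|_{\mathcal L^2(D)}\lesssim\|N_0[f,\alpha^s]\|_{H^1}$ on a slightly larger simple manifold (this uses injectivity \emph{plus} ellipticity of the normal operator, not injectivity alone), together with a uniform bound $\|N_\lambda-N_0\|=O(|\lambda|)$ in the same norms, which holds because the Schwartz kernels differ by the factor $e^{-\lambda t}-1$. Only this pair of estimates produces the $\varepsilon$ in the statement; neither appears in your sketch. Your alternative route --- absorbing the $O(\lambda)$ terms in the Pestov identity --- is likewise only asserted: already at $\lambda=0$ a source $a$ of fibrewise degree one is not handled by the identity as you wrote it (the classical $1$-form argument needs a modification of the Pestov scheme), and absorbing $\lambda\nabla^v u$ and $\lambda u$ terms requires a Poincar\'e-type inequality on $SD$ with explicit constants. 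So the skeleton is right and consistent with the source the paper cites, but the decisive quantitative step where the smallness of $\lambda$ actually enters is missing.
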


The previous argument together with the above theorem proves Proposition~\ref{ray transform} for smooth $f$ and $\alpha$. However, this requires $f$ and $\alpha$ to be $C^\infty$-smooth in $D$ and it is not obvious how to do this when $f$ and $\alpha$ are $L^{\infty}$ on $D$. We resolve this problem by using duality and the ellipticity of the normal operator $T_\lambda^*T_\lambda$.
\medskip

In the space of functions on $\p_+SD$ define the inner product
$$
(h,h')_{L^2_\mu(\p_+ SD)}:=\int_D h\,h'\, d\mu
$$
where $d\mu(x,v)=\<v,\nu\>_{g_0(x)}d\Sigma^{2n-4}$. Denote the corresponding Hilbert space and the norm by $L^2_\mu(\p_+ SD)$ and $\|\cdot\|_{L^2_\mu(\p_+ SD)}$, respectively. We will also write
$$
h_\psi(x,v)=h(\phi_{-\tau(x,-v)}(x,v)),\quad (x,v)\in SD,
$$
for $h\in C^\infty(\p_+ SD)$.
\medskip

If $F$ is a notation for a function space ($C^k$, $L^p$, $H^k$, etc.), then we will denote by $\mathcal F(D)$ the corresponding space of pairs $[f,\alpha]$ with $f$  a function and $\alpha$ a 1-form on $D$. In particular, $\mathcal L^2(D)$ is the space of square integrable pairs $[f,\alpha]$, and we endow this space with the inner product
$$
([f,\alpha],[f',\alpha'])_{\mathcal L^2(D)}=\int_D \(ff'+\<\alpha,\alpha'\>_{g_0}\)\,d\Vol_{g_0}.
$$
\begin{Lemma}\label{adjoint of T_lambda}
If $f\in C^\infty(D)$, $\alpha$ is a smooth $1$-form on $D$ and $h\in C^\infty_0((\p_+ SD)^{\rm int})$, then
$$
(T_\lambda[f,\alpha],h)_{L^2_\mu(\p_+ SD)}=([f,\alpha],T_\lambda^* h)_{\mathcal L^2(D)},
$$
where $T_\lambda^* h$ is a pair defined as
$$
T^*_\lambda h(x)=\left[\int_{S_x D} h_\psi(x,v)\, e^{-\lambda \tau(x,-v)}\,d\sigma_x(v),\int_{S_x D}v^k h_\psi(x,v)\, e^{-\lambda \tau(x,-v)}\,d\sigma_x(v)\right].
$$
\end{Lemma}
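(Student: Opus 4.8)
The plan is to deduce the identity from Santaló's formula together with the flow-invariance of the backward extension $h_\psi$. Recall Santaló's formula (see \cite{Shar}): for any integrable $W$ on $SD$,
\[
\int_{SD} W\, d\Sigma^{2n-3} = \int_{\p_+ SD} \int_0^{\tau(x,v)} W(\phi_t(x,v))\, dt\, d\mu(x,v).
\]
I would apply this to
\[
W(x,v) = [f(x) + \alpha_k(x) v^k]\, h_\psi(x,v)\, e^{-\lambda \tau(x,-v)}, \qquad (x,v)\in SD,
\]
and show that the right-hand side of Santaló's formula reproduces the boundary pairing $(T_\lambda[f,\alpha], h)_{L^2_\mu(\p_+ SD)}$, while the left-hand side, after fiber integration, reproduces $([f,\alpha], T^*_\lambda h)_{\mathcal L^2(D)}$.

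First I would record the two bookkeeping identities on which everything hinges. Fix $(x,v)\in \p_+ SD$ and $0\le t\le \tau(x,v)$, and set $(y,w) = \phi_t(x,v)$. Flowing backward for time $t$ returns to the entry point $(x,v)\in \p_+ SD$, so the backward exit time satisfies $\tau(y,-w) = t$ and $\phi_{-\tau(y,-w)}(y,w) = (x,v)$. Consequently $h_\psi(\phi_t(x,v)) = h(\phi_{-t}(y,w)) = h(x,v)$; that is, $h_\psi$ is constant along each forward geodesic and equals the boundary value $h$ at the entry point. Moreover $e^{-\lambda\tau(y,-w)} = e^{-\lambda t}$, matching exactly the attenuation weight in the definition of $T_\lambda$. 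Substituting $(y,w) = \phi_t(x,v)$ into $W$ therefore gives
\[
W(\phi_t(x,v)) = [f(\gamma_{x,v}(t)) + \alpha_k(\gamma_{x,v}(t))\dot\gamma_{x,v}^k(t)]\, h(x,v)\, e^{-\lambda t},
\]
and, since $h(x,v)$ is independent of $t$, the inner $t$-integral factors as $h(x,v)\, T_\lambda[f,\alpha](x,v)$. Hence the right-hand side of Santaló's formula equals $\int_{\p_+ SD} T_\lambda[f,\alpha](x,v)\, h(x,v)\, d\mu(x,v) = (T_\lambda[f,\alpha], h)_{L^2_\mu(\p_+ SD)}$.

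For the left-hand side I would use the local product decomposition $d\Sigma^{2n-3} = d\sigma_x(v)\, d\Vol_{g_0}(x)$ of the Liouville measure to carry out the fiber integration over each sphere $S_x D$:
\begin{align*}
\int_{SD} W\, d\Sigma^{2n-3}
&= \int_D f(x)\(\int_{S_x D} h_\psi\, e^{-\lambda\tau(x,-v)}\, d\sigma_x(v)\) d\Vol_{g_0}(x) \\
&\quad + \int_D \alpha_k(x)\(\int_{S_x D} v^k h_\psi\, e^{-\lambda\tau(x,-v)}\, d\sigma_x(v)\) d\Vol_{g_0}(x).
\end{align*}
The two inner fiber integrals are precisely the two components of $T^*_\lambda h(x)$, and the contraction $\alpha_k v^k = \alpha(v)$ matches the $\mathcal L^2(D)$ pairing of $\alpha$ against the one-form component of $T^*_\lambda h$; thus the left-hand side equals $([f,\alpha], T^*_\lambda h)_{\mathcal L^2(D)}$. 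Equating the two sides of Santaló's formula yields the lemma. The computation is essentially bookkeeping once Santaló's formula is in hand; the step requiring the most care is the verification of the flow-invariance $h_\psi(\phi_t(x,v)) = h(x,v)$ and of the attenuation identity $\tau(y,-w)=t$ for $(y,w)=\phi_t(x,v)$, together with checking that the hypothesis $h\in C^\infty_0((\p_+ SD)^{\rm int})$ keeps all integrands away from the glancing set, so that Santaló's formula and the interchange of the fiber and base integrations are legitimate.
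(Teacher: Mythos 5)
Your proof is correct and follows essentially the same route as the paper: both hinge on Santal\'o's formula applied to the integrand $[f(x)+\alpha_k(x)v^k]\,h_\psi(x,v)\,e^{-\lambda\tau(x,-v)}$ together with the fiber decomposition of the Liouville measure, the only difference being that you run the identity from the $SD$ side to the boundary side while the paper runs it in the opposite direction. Your explicit verification of the bookkeeping identities $h_\psi(\phi_t(x,v))=h(x,v)$ and $\tau(y,-w)=t$ is a welcome addition, as the paper leaves these implicit.
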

\begin{proof}
By Santal\'o formula (see \cite{Shar} or \cite{DPSU})
\begin{align*}
(T_\lambda[f,\alpha],&h)_{L^2_\mu(\p_+ SD)}\\
&=\int_{\p_+SD}\int_0^{\tau(x,v)} [f(\gamma_{x,v}(t))+\alpha_k(\gamma_{x,v}(t))\dot\gamma_{x,v}^k(t)]e^{-\lambda t}\,dt\, h(x,v)\,d\mu(x,v)\\
&=\int_{SD}[f(x)+\alpha_k(x)v^k]h_\psi(x,v)e^{-\lambda\tau(x,-v)}\,d\Sigma^{2n-3}(x,v)\\
&=\int_{D}f(x)\(\int_{S_x D} h_\psi(x,v)\, e^{-\lambda \tau(x,-v)}\,d\sigma_x(v)\)\,d\Vol_g(x)\\
&\quad+\int_{D}\alpha_k(x)\(\int_{S_x D}v^k h_\psi(x,v)\, e^{-\lambda \tau(x,-v)}\,d\sigma_x(v)\)\,d\Vol_g(x).
\end{align*}
This proves the statement.
\end{proof}

%\HOX{Here is why I think you need the condition that $M\subset \mathbb{R}\times D^{int}$: at the beginning of this proof it is assumed that $\alpha$ can be extended as zero to a larger manifold while preserving that $\alpha$ is $W^{1,\infty}$ (the proof uses this regularity), this is possible if $M\subset \mathbb{R}\times D^{int}$ since we can make $\alpha$ decay to zero from $\partial M$ to $\mathbb{R}\times \partial D$; this is not possible if $D=\pi(M)$ since we do not know that $\alpha=0$ on $\partial D$. This seems to suggest that we cannot take $D=\pi(M)$.}
\begin{proof}[Proof of Proposition~\ref{ray transform}]
First, we extend $(D,g_0)$ to a slightly larger simple manifold and to extend both $f$ and $\alpha$ by zero. Then $f$ and $\alpha$ are still in $L^\infty$, and in particular in $L^p$ for all $1<p<\infty$. In this way we can assume that both $f$ and $\alpha$ are compactly supported in $D^{\rm int}$.
\medskip

We let $b$ also depend on $\omega$ and change notations to write the assumption in the form
$$
\int_{S_x D}\int_0^{\tau(x,v)}e^{-\lambda t}[f(\gamma_{x,v}(t))+\alpha_k(\gamma_{x,v}(t))\dot\gamma_{x,v}^k(t)]b(x,v)\,dt\,d\sigma_x(v)=0
$$
for all $x\in \p D$ and $b\in C^\infty_0((\p_+ SD)^{\rm int})$. Let $\widetilde D$ be a compact submanifold of $D$ with boundary such that $(\widetilde D,g_0)$ is also simple, $\widetilde D\subset \subset D^{\rm int}$ and supports of $f$ and $\alpha$ are compact subsets of $\widetilde D^{\rm int}$. Note that $\alpha$ is $L^\infty$ on $D$ (and in particular being in $L^2$ on $D$) implies that in particular $\delta \alpha\in H^{-1}(\widetilde D)$. Then we obtain the solenoidal decomposition $\alpha=\alpha^s+dp$ on $\widetilde D$, where $\delta_{g_0}\alpha^s=0$ and $p\in H^1_0(\widetilde D)$ with $-\Delta_{g_0}p=\delta\alpha$. Here $\delta_{g_0}\alpha=\nabla_{g_0}^i\alpha_i$, where $\nabla_{g_0}$ is the covariant derivative corresponding to the metric $g_0$. Extend $p$ to $D$ by zero, so that $p\in H^1_0(D)$ with $p=0$ in $D\setminus \widetilde D$. An integration by parts shows that we have
\begin{equation}\label{some identity}
\int_{S_x D}\int_0^{\tau(x,v)}e^{-\lambda t}[f(\gamma_{x,v}(t))+\lambda p(\gamma_{x,v}(t))+\alpha^s_k(\gamma_{x,v}(t))\dot\gamma_{x,v}^k(t)]b(x,v)\,dt\,d\sigma_x(v)=0
\end{equation}
for all $x\in \p D$ and $b\in C^\infty_0((\p_+ SD)^{\rm int})$. % If we set
%$$
%w(x,v)=\exp\(\frac{\lambda\(\tau(x,v))-\tau(x,-v)\)}{2}\),\quad (x,v)\in SM_0,
%$$
%then by \cite[Lemma~2.3]{SaU} we have $w\in C^\infty(SM_0)$. From \eqref{some identity} we obtain that the integral
%$$
%\int_{S_x M_0}\int_0^{\tau(x,\xi)}w(\phi_t(x,\xi))[f(\gamma_{x,\xi}(t))+\lambda p(\gamma_{x,v}(t))+\alpha^s_k(\gamma_{x,\xi}(t))\dot\gamma_{x,\xi}^k(t)]b(x,\xi)\,dt\,d\sigma_x(\xi)
%$$
%vanishes for all $x\in \p M_0$ and $b\in C^\infty_0((\p_+ SM_0)^{\rm int})$.
Next, we make the choice $b(x,v)=h(x,v)\mu(x,v)$ for $h\in C^\infty_0((\p_+ SD)^{\rm int})$ and integrate \eqref{some identity} over $\p D$ and get
$$
(T_\lambda[f+\lambda p,\alpha^s],h)_{L^2_\mu (\p_+ SD)}=0.
$$
%where
%$$
%T_w F(x,v)=\int_0^{\tau(x,v)}w(\phi_t(x,\xi))F(\phi_t(x,\xi))\,dt,\quad (x,v)\in\p_+SM_0.
%$$
%\\=\int_{\p_+ SM_0}\int_0^{\tau(x,\xi)}e^{-\lambda t}[f(\gamma_{x,\xi}(t))+\lambda p(\gamma_{x,\xi}(t))+\alpha^s_k(\gamma_{x,\xi}(t))\dot\gamma_{x,\xi}^k(t)]h(x,\xi)\,dt\,d\mu(x,\xi)\\=0.
We are now in the same situation as in the proof of Lemma~\ref{adjoint of T_lambda}, and using the Santal\'o formula implies
$$
([f+\lambda p,\alpha^s],T^*_\lambda h)_{\mathcal L^2(D)}=0
$$
for all $h\in C^\infty_0((\p_+ SD)^{\rm int})$. Note that the last integral is absolutely convergent because $f\in L^\infty(D)$ and $\alpha$ is $1$-form which is $L^{\infty}$ on $D$, and also the previous steps are justified by Fubini's theorem.

It remains to choose $h=T_\lambda[\varphi,\beta]$ for $\varphi\in C^\infty_0(D^{\rm int})$ and $\beta$ being $C^\infty_0$-smooth $1$-form in $D^{\rm int}$, to obtain that
$$
([f+\lambda p,\alpha^s],T_\lambda^* T_\lambda [\varphi,\beta])_{\mathcal L^2(D)}=0.
$$
Since $T_\lambda^* T_\lambda$ is self-adjoint, we have
$$
(T_\lambda^* T_\lambda[f+\lambda p,\alpha^s],[\varphi,\beta])_{\mathcal L^2(D)}=0
$$
for all $\varphi\in C^\infty_0(D^{\rm int})$ and for all $C^\infty_0$-smooth $1$-form $\beta$ in $D^{\rm int}$. Therefore, $T_\lambda^* T_\lambda[f+\lambda p,\alpha^s]=0$. By \cite[Proposition~1]{HS}, $T_\lambda^*T_\lambda$ is an elliptic pseudodifferential operator of order $-1$ in $D^{\rm int}$. Here, ellipticity of $T_\lambda^* T_\lambda$ is in the sense that whenever $f',\alpha'$ are in $L^2(D^{\rm int})$ and $T_\lambda^* T_\lambda[f',\alpha']=0$ and $\delta_{g_0}\alpha'=0$, then $f',\alpha'$ are smooth. Since $f+\lambda p$ and $\alpha^s$ were compactly supported in $D^{\rm int}$, this implies that $f+\lambda p$ and $\alpha^s$ are smooth and compactly supported in $D^{\rm int}$. Hence $f+\lambda p$ and $\alpha^s$ are smooth in $D$ and compactly supported in $D^{\rm int}$. %$f\in C^\infty_0(M^{\rm int}_0)$ and $\alpha$ is $C^\infty_0$-smooth in $M_0^{\rm int}$.
Now we can use the argument for smooth $f$ and $\alpha$ given above, together with Proposition~\ref{inj of attenuated smooth case} to conclude that $f=-\lambda p-\lambda \psi$ and $\alpha=\alpha^s+dp=d\psi+dp$ for some $\psi\in C^\infty(D)$ with $\psi|_{\p D}=0$. To finish the proof, it remains to show that $p\in W^{1,\infty}(D)$. But this is clear from $dp=\alpha-\alpha^s$ and from $\alpha$ is $L^{\infty}$ on $D$ and $\alpha^s$ is $C^\infty$ on $D$.
\end{proof}

\section{Boundary Determination and proof of Theorem \ref{main th2}}\label{section on boundary determination}

In this part we show boundary determination of the vector field $X$. For the generality of the statement we will assume the knowledge of the Cauchy data set $C_{g,X,q}$. It is easy to see that when $0$ is not a Dirichlet eigenvalue of $\mathcal{L}_{g,X,q}$, knowledge of the Cauchy data set is equivalent to knowledge of the Dirichlet-to-Neumann map $N_{g,X,q}$. Moreover, we can determine not only the boundary values of $X$, but also the boundary values of $q$. This is the following proposition.

\begin{Proposition} \label{bdry_deter}
Let $(M,g)$ be admissible, and let $m\geq 2$ be an integer. Suppose that $X$ is a $C^\infty$ vector field on $M$ and $q\in C^{\infty}(M)$. Then the knowledge of the Cauchy data set $C_{g,X,q}$ determines the boundary values of $X$ and the boundary values of $q$.
\end{Proposition}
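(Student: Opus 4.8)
The plan is to reduce the statement to a comparison of two configurations with the same data and then to probe the identity of Proposition~\ref{main int identity prop} with solutions that concentrate at a boundary point. Concretely, suppose two admissible pairs $(X_1,q_1)$ and $(X_2,q_2)$ satisfy $C_{g,X_1,q_1}=C_{g,X_2,q_2}$, put $X=X_1-X_2$ and $q=q_1-q_2$, and aim to prove $X|_{\p M}=0$ and $q|_{\p M}=0$. Green's formula, exactly as in Proposition~\ref{main int identity prop}, gives
$$
\int_M\left[\<X,v\nabla u\>_g+q\,uv\right]d\Vol_g=0
$$
for all $u,v\in H^{2m}(M)$ with $\mathcal L_{g,X_2,q_2}u=0$ and $\mathcal L_{g,-X_1,-\div_g X_1+q_1}v=0$ in $M$. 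Everything then rests on choosing $u,v$ concentrating, as a large parameter $\tau\to\infty$, at a prescribed $x_0\in\p M$.

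First I would fix $x_0$ and pass to boundary normal coordinates $(x',x_n)$, $x_n\ge0$, in which $g$ is a block metric with $g(x_0)=\id$. For a unit tangential covector $\xi'$, I would build approximate solutions
$$
u_0=e^{i\tau(x'\cdot\xi'+ix_n)}\,\Phi\big(\tau^{1/2}(x'-x_0')\big)\,p_u(x_n)\,a_u(x),
$$
and, with the opposite tangential oscillation $e^{-i\tau(x'\cdot\xi')}$, a companion $v_0$ of the same shape, so that $u_0v_0$ concentrates at $x_0$ and has no residual tangential oscillation. Here $\Phi$ is a fixed Gaussian giving tangential concentration at scale $\tau^{-1/2}$, the factor $e^{-\tau x_n}$ gives normal decay, $p_u$ is a polynomial of degree $\le m-1$ (the decaying null solutions of the normal model $(\p_{x_n}^2-\tau^2)^m$ attached to the principal part $(-\Delta_g)^m$), and $a_u$ is a smooth amplitude chosen to kill the leading transport error. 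Since $0$ is not a Dirichlet eigenvalue, I would correct $u_0$ to an exact solution $u=u_0+r_u$ with $\gamma r_u=0$ by solving the inhomogeneous Dirichlet problem for the residual $\mathcal L_{g,X_2,q_2}u_0$, estimating $r_u$ in a weighted norm so that it drops out of every limit below; $v_0$ is corrected identically.

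Substituting into the identity, $\nabla u=\tau(i\xi'-e_n)u+O(u)$, so the $X$-term is of order $\tau$ while the $q$-term is of order $1$. Normalizing by the appropriate power of $\tau$ and letting $\tau\to\infty$ isolates $\<X(x_0),i\xi'-e_n\>_g$: the imaginary part $\<X(x_0),\xi'\>_g$ yields, upon varying $\xi'$ over tangential directions, the tangential components of $X(x_0)$, while the real part yields its normal component. Hence $X(x_0)=0$, and since $x_0\in\p M$ is arbitrary, $X_1=X_2$ on $\p M$.

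Granting $X|_{\p M}=0$, write $X=x_n Y$ near $\p M$ with $Y$ smooth. The difficulty in recovering $q$ is that this surviving $X$-contribution carries an extra factor $x_n$ and is therefore of the \emph{same} order in $\tau$ as the $q$-term, so a single subprincipal limit only returns a combination of $q(x_0)$ and $\p_{x_n}X(x_0)$. I would resolve this using the several admissible normal profiles $x_n^k e^{-\tau x_n}$, $0\le k\le m-1$, available precisely because $m\ge2$: distinct choices of $p_u,p_v$ reweight the normal moments $\int_0^\infty x_n^j e^{-2\tau x_n}\,dx_n$ and produce a determined linear system from which $q(x_0)$ can be solved off the unwanted derivative terms. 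This separation, together with the uniform control of the correctors $r_u,r_v$ against the concentrating amplitudes, is the main obstacle; the remaining ingredients — the coordinate reduction, the WKB construction of $u_0,v_0$, and the asymptotic evaluation of the localized integrals — are routine.
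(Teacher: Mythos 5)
Your route is genuinely different from the paper's, but as written it has a real gap. The paper does not use concentrating solutions or the integral identity at all for boundary determination: it rewrites $\mathcal L_{g,X,q}u=0$ as a second-order $m\times m$ system in $U=(u,(-\Delta_g)u,\dots,(-\Delta_g)^{m-1}u)$, factorizes the system operator as $(D_n\otimes I+iE\otimes I+iA_{12}-iB(x,D_{x'}))(D_n\otimes I+iB(x,D_{x'}))$ modulo smoothing, with $B$ a first-order pseudodifferential operator constructed symbol by symbol, then uses the Lee--Uhlmann argument to conclude that the Cauchy data determine the full symbol of $B$ at $x_n=0$, and reads off $X$ from the degree-one symbol identity (the determined quantity is $-X^n|_{x_n=0}\sqrt{Q_2(x',0,\xi')}+iX^\alpha|_{x_n=0}\xi_\alpha$, and parity in $\xi'$ separates the normal from the tangential components --- the same parity trick you invoke) and $q$ from the degree-zero identity. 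Your Brown/Kohn--Vogelius-type scheme, probing the identity of Proposition~\ref{main int identity prop} with solutions oscillating tangentially and concentrating at $x_0\in\partial M$, is a recognized alternative in principle, but its analytic heart is exactly the step you assert in one sentence.

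The gap is the control of the correctors $r_u,r_v$, which do \emph{not} ``drop out of every limit'' for the construction you describe. Quantitatively: with $u_0=e^{i\tau(x'\cdot\xi'+ix_n)}\Phi(\tau^{1/2}(x'-x_0'))p_u a_u$ one has $\|u_0\|_{L^2}\sim\tau^{-1/2-(n-1)/4}$, $\|\nabla u_0\|_{L^2}\sim\tau\|u_0\|_{L^2}$, and the main $X$-term in the identity has size $\tau\int|u_0v_0|\,d\Vol_g\sim\tau^{-(n-1)/2}$. The only solvability you invoke is the unscaled Dirichlet one ($0$ not an eigenvalue), i.e.\ $\|r_v\|_{H^{2m}}\le C\|\mathcal L^*v_0\|_{L^2}$ with $C$ independent of $\tau$. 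Then the cross term $|\int\langle X,r_v\nabla u_0\rangle_g\,d\Vol_g|\lesssim\|r_v\|_{L^2}\|\nabla u_0\|_{L^2}$ is negligible against the main term only if $\|\mathcal L^*v_0\|_{L^2}=o(1)\|v_0\|_{L^2}$ (similarly one needs $\|\mathcal L u_0\|_{L^2}=o(\tau)\|u_0\|_{L^2}$). But with the linear phase $x'\cdot\xi'+ix_n$ the eikonal quantity $p(x)=g^{\alpha\beta}(x)\xi_\alpha\xi_\beta-1$ vanishes only at the single point $x_0$, is of size $\tau^{-1/2}$ on the effective support of the Gaussian, and enters the residual through the top-order term $\tau^{2m}p(x)^m(\cdots)$, which no choice of amplitude can remove (the amplitude must be $\approx 1$ at $x_0$ for the main term to survive). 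Hence $\|\mathcal L^*v_0\|_{L^2}\gtrsim\tau^{3m/2}\|v_0\|_{L^2}$, and the corrector contribution exceeds the main term by a factor $\tau^{3m/2}$ ($\tau^{3}$ already for $m=2$). Closing the argument requires either genuine Gaussian-beam quasimodes --- a phase corrected so the eikonal error vanishes to high order at $x_0$, plus a transport hierarchy in powers of $\tau^{-1/2}$ of length growing linearly in $m$ --- or Brown-type negative-order estimates on the oscillating residual combined with transposition solvability for the $2m$-th order Dirichlet problem. Neither is supplied, and neither can be borrowed from the paper: the Carleman solvability of Proposition~\ref{Solvability result} is tied to the limiting Carleman weight $\varphi=x_1$ of the admissible structure, not to the boundary-normal phase $ix_n$ you use.

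A secondary gap is the $q$-step: the claim that varying the normal profiles $x_n^ke^{-\tau x_n}$ produces a ``determined linear system'' is unverified, and at that subprincipal order the limits also contain same-size contributions from $\partial_ng^{\alpha\beta}(x_0)$ (the second fundamental form), from derivatives of the amplitudes and of the Gaussian, and from $\partial_{x_n}$ falling on the polynomial profiles; so the system couples $q(x_0)$, the normal derivative of $X$ at $x_0$, and geometric data, and its solvability for $q(x_0)$ must actually be checked. The paper's symbol calculus handles this bookkeeping automatically: once $b_1,b_0$ are known, $q(x_0)$ is read off from the degree-zero symbol equation.
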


To prove this proposition, it suffices to show that for any $p\in\partial M$, $C^{M}_{g,X,q}$ determines $X(p)$ and $q(p)$. In the following we will consider this local problem.\\

Fix a point $p\in\partial M$ and let $(x',x_n)$ be the boundary normal coordinates near $p$, where $x'=(x_1,\dots,x_{n-1})$. In these coordinates $\partial M$ corresponds to $\{x_n=0\}$, the vector field $X$ becomes the differential operator $X=X^{j}\frac{\partial}{\partial x_j}$, and the metric tensor can be written as
$$g=g_{\alpha\beta}dx^{\alpha}\otimes dx^{\beta}+dx^n\otimes dx^n.$$
Here the in the following we use the convention that Greek indices run from $1$ to $n-1$ and Roman indices from $1$ to $n$. Denote $D_j=\frac{1}{i}\frac{\partial}{\partial x_j}$, then the Laplace-Beltrami operator in the boundary normal coordinates takes the form
\begin{equation} \label{Laplace_normal}
-\Delta_g=D^2_n+iE(x)D_n +Q_2(x,D_{x'})+Q_1(x,D_{x'})
\end{equation}
with $E, Q_1, Q_2$ given by
\begin{align}
E(x)= & \frac{1}{2}g_{\alpha\beta}\partial_n g^{\alpha\beta}, \label{E} \vspace{1ex}\\
Q_2(x,D_{x'})= & g^{\alpha\beta}D_{\alpha}D_{\beta}, \label{Q2} \vspace{1ex} \\
Q_1(x,D_{x'})= & -i(\frac{1}{2}g^{\alpha\beta}\partial_{\alpha}(\log|g|)+\partial_{\alpha}g^{\alpha\beta})D_{\beta}. \label{Q1}
\end{align}
\\

Next we would like to write the $2m$ order equation
\begin{equation}\label{single_eqn}
\mathcal{L}_{g,X,q}u=(-\Delta_g)^{m}u+Xu+qu=0 \quad\quad\text{ in } M, \quad m\geq 2
\end{equation}
as a second order system.  To this end, introduce
$$u_1=u, \quad u_2=(-\Delta_g) u, \quad \dots, \quad u_m=(-\Delta_g)^{m-1}u$$
and let $U=(u_1,\dots,u_m)^{T}$. By a standard reduction, \eqref{single_eqn} can be written as a system of equations in $U$:
\begin{equation} \label{system}
\mathcal{L}_{A_{11}, A_{12} ,A_0}U:=(-\Delta_g\otimes I +iA_{11}(x,D_{x'})+iA_{12}(x)D_n+A_0(x))U=0 \quad\text{in}\quad M,
\end{equation}
where $I$ is the $m\times m$ identity matrix, $A_{11}(x,D_{x'})$, $A_{12}(x)$ and $A_0(x)$ are defined by
$$A_{11}(x,D_{x'}):=
\left(\begin{array}{cccc}
	0 & 0 & \dots & 0 \\
	\vdots & \vdots & \dots & \vdots \\
	0 & 0 & \dots & 0 \\
	X^\alpha(x)D_\alpha & 0 & \dots & 0 
\end{array}\right),
\quad
A_{12}(x):=
\left(\begin{array}{cccc}
	0 & 0 & \dots & 0 \\
	\vdots & \vdots & \dots & \vdots \\
	0 & 0 & \dots & 0 \\
	X^n(x) & 0 & \dots & 0 
\end{array}\right),
$$
\vspace{1ex}
$$
A_0(x):=\left(\begin{array}{ccccc}
	0 & -1 & 0 & \dots & 0 \\
	0 & 0 & -1 & \dots & 0 \\
	\vdots & \vdots & \vdots & \dots & \vdots \\
	0 & 0 & 0 & \dots & -1 \\
	q(x) & 0 & 0 & \dots & 0 
\end{array}\right).
$$
The associated Cauchy data set to the system \eqref{system} is
$$C_{A_{11}, A_{12} ,A_0}:=\{(U|_{\partial M}, \partial_{\nu}U|_{\partial M}): \mathcal{L}_{A_{11}, A_{12} ,A_0}U=0 \; \text{ in } M, \quad U\in (H^{2}(M))^{m}\}.$$
It is easy to see that $C_{A_{11}, A_{12} ,A_0}$ and $C_{g,X,q}$ are mutually determined, hence it suffices to show $C_{A_{11}, A_{12}, A_0}$ determines $A_{11}$, $A_{12}$ and $A_0$ at $p\in\partial M$. \\

The following result gives a factorization of the operator $\mathcal{L}_{A_{11}, A_{12}, A_0}$. Similar techniques are employed in \cite{DKSU, KLU, LU, NSU}.
\begin{Proposition}
There is a matrix-valued pseudodifferential operator $B(x,D_{x'})$ of order $1$ in $x'$, depending smoothly on $x_n$, such that
\begin{equation} \label{factorization}
\mathcal{L}_{A_{11}, A_{12} ,A_0}=(D_n\otimes I+iE(x)\otimes I+iA_{12}(x)-iB(x,D_{x'}))(D_n\otimes I+iB(x,D_{x'}))
\end{equation}
modulo a smoothing operator. Moreover, the principle symbol of the operator $B(x,D_{x'})$ is $-\sqrt{Q_{2}(x,\xi')}I$. Here $E(x)$ and $Q_2(x, D_{x'})$ are given by \eqref{E} and \eqref{Q2} respectively.
\end{Proposition}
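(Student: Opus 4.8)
The plan is to construct $B$ as a matrix-valued classical pseudodifferential operator in the tangential variables $x'$, with $x_n$ entering as a smooth parameter, by producing its full symbol $b(x,\xi')\sim\sum_{j\ge 0}b_{1-j}(x,\xi')$ as an asymptotic sum of terms homogeneous of degree $1-j$ in $\xi'$, and then matching the two sides of \eqref{factorization} order by order. The factorization is required only modulo a smoothing operator, which is exactly what such a symbol construction delivers.

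First I would expand the right-hand side of \eqref{factorization}. Writing $D_n(Bu)=(D_nB)u+B\,D_nu$ with $D_nB:=\tfrac{1}{i}\partial_{x_n}B$, the product becomes
\begin{align*}
& (D_n\otimes I+iE\otimes I+iA_{12}-iB)(D_n\otimes I+iB) \\
&= D_n^2\otimes I+i(D_nB)+(iE\otimes I+iA_{12})D_n+(iE\otimes I+iA_{12}-iB)(iB),
\end{align*}
where the two first-order contributions $iB\,D_n$ and $-iB\,D_n$ cancel. Comparing this with $-\Delta_g\otimes I+iA_{11}+iA_{12}D_n+A_0$ and using the expression \eqref{Laplace_normal}, the $D_n^2$ and $D_n^1$ parts match automatically, and the statement reduces to the purely tangential (Riccati-type) operator equation
\begin{equation*}
B^2-(E\otimes I+A_{12})B+i\,D_nB=Q_2\otimes I+Q_1\otimes I+iA_{11}+A_0,
\end{equation*}
to be solved modulo a smoothing operator.

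Next I would solve this equation at the level of symbols, using the composition formula $\sigma(B^2)\sim\sum_\alpha\tfrac{1}{\alpha!}\partial_{\xi'}^\alpha b\,D_{x'}^\alpha b$. Collecting the terms homogeneous of degree $2$ in $\xi'$ gives the eikonal relation $b_1^2=Q_2(x,\xi')\,I$. Since $Q_2(x,\xi')=g^{\alpha\beta}\xi_\alpha\xi_\beta$ is a positive scalar for $\xi'\neq 0$, it has a smooth elliptic square root $\sqrt{Q_2}$ homogeneous of degree $1$, and I would take the branch $b_1=-\sqrt{Q_2}\,I$ as in the statement. The decisive structural fact is that $b_1$ is a scalar multiple of the identity, hence central and invertible. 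Collecting the terms homogeneous of degree $2-j$ for $j\ge 1$ then produces, at each step, an equation $b_1 b_{1-j}+b_{1-j}b_1=R_{1-j}$, where $R_{1-j}$ is an explicit expression built from the already-determined symbols $b_1,\dots,b_{2-j}$, from $E$, $A_{12}$, $A_{11}$, $A_0$, $Q_1$, and from the $\partial_{x_n}$-derivatives coming from the term $i\,D_nB$. Because $b_1=-\sqrt{Q_2}\,I$ is central, the left side equals $2b_1 b_{1-j}=-2\sqrt{Q_2}\,b_{1-j}$, so $b_{1-j}=-\tfrac12(\sqrt{Q_2})^{-1}R_{1-j}$ is uniquely determined and homogeneous of degree $1-j$. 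Running this recursion and taking a Borel asymptotic sum yields a symbol $b$, hence an operator $B(x,D_{x'})$ of order $1$ depending smoothly on $x_n$, with principal symbol $-\sqrt{Q_2}\,I$ and satisfying \eqref{factorization} modulo smoothing.

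The main obstacle is the symbolic bookkeeping: one must expand the non-commuting product carefully, track how the matrix-valued lower-order coefficients $A_{11}$, $A_{12}$, $A_0$ and the term $i\,D_nB$ feed into each $R_{1-j}$, and confirm that every step of the recursion is algebraically solvable. This last point is precisely where the scalar nature of $b_1$ is essential: it makes the Sylvester-type equation $b_1 b_{1-j}+b_{1-j}b_1=R_{1-j}$ trivially invertible, so no spectral-separation hypothesis on a genuinely matrix-valued leading symbol is needed, and the construction proceeds for all $j$ without obstruction.
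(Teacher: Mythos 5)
Your proposal is correct and follows essentially the same route as the paper: expanding the factorization to reduce it to the tangential Riccati-type operator equation $B^2-(E\otimes I+A_{12})B+\partial_n B=Q_2\otimes I+Q_1\otimes I+iA_{11}+A_0$, then solving it order by order on the symbol level, with $b_1=-\sqrt{Q_2}\,I$ and each lower-order term $b_{1-j}$ determined uniquely because the principal symbol is a scalar multiple of the identity. Your explicit remark that centrality of $b_1$ trivializes the Sylvester equations is exactly the "elementary linear algebra" step the paper invokes, so the two arguments coincide.
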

\begin{proof}
Plug \eqref{Laplace_normal} into \eqref{system} we have
\begin{align*}
\mathcal{L}_{A_{11}, A_{12} ,A_0}= & (D^2_n+iE(x)D_n +Q_2(x,D_{x'})+Q_1(x,D_{x'}))\otimes I 	\vspace{1ex}\\
 & +iA_{11}(x,D_{x'})+iA_{12}(x)D_n+A_0(x).
\end{align*}
Comparing this expression with \eqref{factorization} gives the following constrains on $B(x,D_{x'})$ modulo a smoothing operator:
\begin{align}
 & B^{2}(x,D_{x'})+i[D_n\otimes I, B(x,D_{x'})]-E(x)B(x,D_{x'})-A_{12}(x)B(x,D_{x'})  \nonumber \\
= & Q_2(x,D_{x'})\otimes I +Q_1(x,D_{x'})\otimes I+iA_{11}(x,D_{x'})+A_0(x). \label{operator_equal}
\end{align}
Let $b(x,\xi')$ be the full symbol of $B(x,D_{x'})$, then \eqref{operator_equal} implies on the level of symbols that
\begin{equation} \label{symbol_equal}
\sum_{|\alpha|\geq 0}\frac{1}{\alpha !}\partial^{\alpha}_{x'}b D^{\alpha}_{\xi'}b + \partial_n b -E(x)b -A_{12}(x)b= Q_2(x,\xi')I+Q_1(x,\xi')I+iA_{11}(x,\xi')+A_0(x).
\end{equation}
Let $b\sim\sum_{j\leq 1}b_j$ where $b_j(x,\xi')$ is an $m\times m$ matrix with entries homogeneous of degree $j$ in $\xi'=(\xi_1,\dots,\xi_{n-1})$. Collecting the terms homogeneous of degree $2$ in \eqref{symbol_equal} yields
$$b^{2}_{1}(x,\xi')=Q_{2}(x,\xi')I,$$
from which we can choose
\begin{equation}\label{homo_two}
b_1(x,\xi')=-\sqrt{Q_{2}(x,\xi')}I.
\end{equation} 
Collecting the terms homogeneous of degree $1$ in \eqref{symbol_equal} yields
\begin{equation} \label{homo_one}
b_0b_1+b_1 b_0+\sum_{|\alpha|=1}\partial^{\alpha}_{x'}b_1 \partial^{\alpha}_{\xi'}b_1 +\partial_n b_1-E(x)b_1-A_{12}(x)b_1=Q_{1}(x,\xi')I+iA_{11}(x,\xi').
\end{equation}
Since $b_1(x,\xi')$ has been determined above, $E(x)$ and $Q_{1}(x,\xi')$ are known from \eqref{E} \eqref{Q1}, by some elementary linear algebra there exists a unique $b_0(x,\xi')$ satisfying this identity. Next collecting the terms of homogeneous of degree $0$ in \eqref{symbol_equal} implies
\begin{align} 
b^{2}_{0}+b_{1}b_{-1}+b_{-1}b_1+\sum_{|\alpha|=1}\partial^{\alpha}_{x'}b_1 \partial^{\alpha}_{\xi'}b_0 + \sum_{|\alpha|=1}\partial^{\alpha}_{x'}b_0 \partial^{\alpha}_{\xi'}b_1 + \sum_{|\alpha|=2}\frac{1}{2}\partial^{\alpha}_{x'}b_1 \partial^{\alpha}_{\xi'}b_1  \nonumber \\
+\partial_n b_0-E(x)b_0 -A_{12}(x)b_0=A_0(x), \label{homo_zero} 
\end{align}
From which we can solve for $b_{-1}(x,\xi')$. In general, the term $b_{j}(x,\xi')$ can be determined by considering the terms homogeneous of degree $j+1$ in \eqref{symbol_equal}. This completes the proof.
\end{proof}

%\textbf{Personal remark (to be removed):} In the scalar case considered in \cite{DKSU} there is no problem when we solve for $b_0(x,\xi')$ from \eqref{homo_one}. However, in the matrix case this is not so obvious. In fact \cite{KLU} makes a mistake here, in (4.8) they write $2b_1 b_0$, this is correct for the scalar case, but for matrices this term should be $b_0 b_1+b_1 b_0$, i.e., no commutativity. This causes a little difficulty in solving for $b_0$, but still doable. We can argue as follows. The equation \eqref{homo_one} for $b_0$ is of the type $b_1 b_0+b_0 b_1=***$, thus we consider matrix equation of the form $AB+BA=C$ where $A=b_1$ is negative definite, $B=b_0$ is the unknown matrix. To show the operator $B\mapsto AB+BA$ is surject, it suffices to show it is injective. Suppose $AB+BA=0$, let $A=UDU^{-1}$ be the diagonal decomposition, then $AB+BA=0$ implies $D(U^{-1}BU)+(U^{-1}BU)D=0$. Since $D$ is diagonal and all the diagonal elements are negative, this implies that $U^{-1}BU=0$, hence $B=0$.

\begin{proof}[Proof of Proposition \ref{bdry_deter}]
Using a similar argument as in \cite[Proposition 1.2]{LU}, we conclude that the Cauchy data set $C_{A_{11}, A_{12}, A_0}$ determines the operator $B(x',0,D_{x'})$ modulo a smoothing operator. Consequently each $b_{j}|_{x_n=0}$ is determined, $j\leq 1$. It follows from \eqref{homo_one} that the following expression is determined by the Cauchy data set $C_{A_{11}, A_{12}, A_0}$:
$$-X^{n}|_{x_n=0}\sqrt{Q_2(x',0,\xi')}+iX^{\alpha}|_{x_n=0}\xi_{\alpha}, \quad\quad \xi'\in\mathbb{R}^{n-1}.$$
Varying $\xi'$ determines $X^n|_{x_n=0}$ and $X^\alpha|_{x_n=0}$, $\alpha=1,\dots,n-1$. Evaluating \eqref{homo_zero} on $\{x_n=0\}$ shows that the Cauchy data set $C_{A_{11}, A_{12}, A_0}$ determines $A_0|_{x_n=0}$, hence $q|_{x_n=0}$.%Next we determine the first order derivatives of $X^{k}$ on $\{x_n=0\}$. The tangential derivatives are already determined by the boundary values of $X^{k}$, thus we only need to determine $\partial_n X^{k}|_{x_n=0}$, $k=1,\dots,n$. First, notice that $\partial^{l}_n b_1|_{x_n=0}$ is determined for all $l=1,2,\dots$ since by \eqref{homo_two} this quantity is only related to $g$. Second, 
%applying $\partial_n$ to \eqref{homo_one} and then evaluating at $x_n=0$ gives an identity where all the other terms are known except for $\partial_n b_0|_{x_n=0}$ and the following expression
%\begin{equation}\label{deri_deter}
%-\partial_n X^{n}|_{x_n=0}\sqrt{Q_2(x',0,\xi')}+i\partial_n X^{\alpha}|_{x_n=0}\xi_{\alpha}, \quad\quad \xi'\in\mathbb{R}^{n-1}.
%\end{equation}
%However, $\partial_n b_0|_{x_n=0}$ can be determined by evaluating \eqref{homo_zero} on $x_n=0$. Thus varying $\xi'$ in \eqref{deri_deter} determines $\partial_n X^n|_{x_n=0}$ and $\partial_n X^\alpha|_{x_n=0}$, $\alpha=1,\dots,n-1$.
\end{proof}

\begin{proof}[Proof of Theorem \ref{main th2}]
If $0$ is not a Dirichlet eigenvalue of $\mathcal{L}_{g,X_1,q_1}$ and $\mathcal{L}_{g,X_2,q_2}$, then $N_{g,X_1,q_1}=N_{g,X_2,q_2}$ implies $C_{g,X_1,q_1}=C_{g,X_2,q_2}$. By Proposition \ref{bdry_deter} we conclude that $X_1=X_2$ on $\partial M$. The result then follows from Theorem \ref{main th}.
\end{proof}

%\textbf{Personal remark: (to be removed)} Here is why I cannot determine $\partial_n q|_{x_n=0}$. To obtain the expression $\partial_n q|_{x_n=0}$ we need to apply $\partial_n$ to \eqref{homo_zero} and then evaluating at $x_n=0$. There will be three unknowns: $\partial_n b_{-1}|_{x_n=0}$, $\partial^{2}_{n} b_0|_{x_n=0}$ and $\partial_n q|_{x_n=0}$, hence we need to find the first two unknowns. $\partial_n b_{-1}|_{x_n=0}$ can be found by considering the terms of homogeneous of degree $-1$ in \eqref{symbol_equal}, but I cannot find $\partial^{2}_{n} b_0|_{x_n=0}$. By looking at \eqref{homo_one} $\partial^{2}_{n} b_0|_{x_n=0}$ is related to the second derivatives of $A_{11}$ and $A_{12}$, which is also undetermined yet. I tried to follow the argument in \cite{DKSU}, but with matrix-valued symbol we cannot explicitly solve for $b_j, j\leq 1$ (see the previous personal remark), making it hard to follow \cite{DKSU}. If this difficulty cannot be overcome, we may consider $X$ with one derivative and $q$ in $L^\infty$.
\medskip

\section{Proof of Theorem \ref{main th3}}\label{section on connected boundary}

We will follow the argument of \cite[Theorem 1.3]{KLU} and \cite[Theorem 4]{DKSU}. Proceeding as in the proof of Theorem \ref{main th},

We can derive the following integral identity (see \eqref{integral equals zero})
\begin{equation}
\int_M (X_{x_1}^\flat+iX_r^\flat)e^{i\lambda(x_1+ir)}b(\theta)\,dr\,d\theta\,dx_1=0.
\end{equation}
This is similar to \eqref{integral equals zero} but this time the integral is over $M$ instead of $\mathbb{R}\times M_{1,x_1}$ since we cannot extend the vector fields $X_1$ and $X_2$ any more. Varying the smooth function $b(\theta)$ leads to
$$\int_{M_\theta} (X^{b}_{x_1} + i X^{b}_{r}) e^{i\lambda(x_1+ir)} d\bar{\rho}\wedge d\rho=0 \quad\text{for all}\quad\theta\in S^{n-2}$$
where $M_\theta:=\{(x_1,r)\in\mathbb{R}^2: (x_1,r,\theta)\in M\}$ and $\rho=x_1+ir$. Define
$$
f(x')=\int_\R e^{i\lambda x_1} X_{x_1}^\flat(x_1,x')\,dx_1,\quad \alpha(x')=\sum_{j=2}^n\(\int_\R e^{i\lambda x_1} X_j^\flat(x_1,x')\,dx_1\)\,dx^j.
$$
Here we extend $X$ as zero outside of $M$ so that the integral in $x_1$ can be over $\mathbb{R}$. The above argument shows that 
$$
\int e^{-\lambda r}[f(\gamma_{\omega,\theta}(r))+i\alpha(\dot\gamma_{\omega,\theta}(r))]\,dr=0 \quad\text{for all}\quad\theta\in S^{n-2},
$$
where the $r$-integrals are integrals over geodesics $\gamma_{\omega,\theta}$ in $\pi(M)\subset M_0$. Observe that $\alpha(x')$ is $W^{1,\infty}$ on $\pi(M)$ and $f(x')$ is $L^\infty$ on $\pi(M)$. Under the assumption that $(\pi(M),g_0)$ is a simple $(n-1)$-dimensional manifold, we can apply Proposition \ref{ray transform} to $D:=\pi(M)$ and conclude that for small enough $\lambda$, we have $f=-\lambda p$ and $\alpha=-idp$ where $p\in W^{1,\infty}(\pi(M))$ and $p|_{\p \pi(M)}=0$. The definition of $\alpha$ and analyticity of the Fourier transform imply that 
$$
\partial_k X^\flat_j - \partial_j X^\flat_k = 0, \quad j,k = 2,\ldots,n \quad\text{in}\quad M^{\rm int}.
$$
Also
$$
\int e^{i\lambda x_1} (\partial_j X^\flat_1 - \partial_1 X^\flat_j)(x_1,x') \,dx_1 = \partial_j f + i\lambda \alpha_j = 0\quad\text{in}\quad M^{\rm int},
$$
showing that $dX^\flat=0$ in $M$. Since $M$ is simply connected, there exists a function $\phi$ such that $\nabla\phi=X\in W^{1,\infty}(M)$. By \cite[Theorem 4.5.12 and Theorem 3.1.7]{H} we have $\phi\in C^{1,1}(M)$.\\

Next we need to show that $\phi$ is constant on $\partial M$. In the case where we can extend $X$ to be a compactly supported $W^{1,\infty}$ vector field on a larger manifold, that $\phi$ is constant on $\partial M$ simply follows from the construction, but here we have to prove it. This is the content of the next proposition.

\begin{Proposition}
The function $\phi$ is constant on the connnected boundary $\partial M$.
\end{Proposition}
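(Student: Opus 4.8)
The plan is to reduce the statement to a short computation with the Fourier-in-$x_1$ relations already in hand and then to read off the boundary values of $\phi$. Recall that $X=\nabla\phi$ means $X^\flat=d\phi$, so $X_{x_1}^\flat=\partial_{x_1}\phi$ and $X_j^\flat=\partial_{x_j}\phi$ for $j=2,\dots,n$. The relations $f=-\lambda p$ and $\alpha=-i\,dp$, with $p=p_\lambda\in W^{1,\infty}(\pi(M))$, $p_\lambda|_{\partial\pi(M)}=0$, valid for every sufficiently small $\lambda$, then read
$$\int_\R e^{i\lambda x_1}\partial_{x_1}\phi\,dx_1=-\lambda p_\lambda(x'),\qquad \int_\R e^{i\lambda x_1}\partial_{x_j}\phi\,dx_1=-i\,\partial_{x_j}p_\lambda(x'),\quad j\ge 2.$$
Since $\partial M$ is connected and $\phi\in C^{1,1}(M)\subset C^1(\overline M)$, it suffices to show that the tangential gradient of $\Phi:=\phi|_{\partial M}$ vanishes, for then $\Phi$ is a $C^1$ function with zero gradient on a connected manifold and is therefore constant.

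First I would fix a transversal point $x'$ in the interior of $\pi(M)$ whose $x_1$-slice $\{x_1:(x_1,x')\in M\}$ is a single interval $(a(x'),b(x'))$ --- the generic case --- so that the endpoints $(a(x'),x')$ and $(b(x'),x')$ lie on $\partial M$. Writing $\Phi_-(x')=\phi(a(x'),x')$ and $\Phi_+(x')=\phi(b(x'),x')$ for the boundary values on the two sheets and $\Psi_\lambda(x')=\int_{a(x')}^{b(x')}e^{i\lambda x_1}\phi\,dx_1$, integration by parts in $x_1$ turns the first relation into
$$e^{i\lambda b}\Phi_+-e^{i\lambda a}\Phi_--i\lambda\Psi_\lambda=-\lambda p_\lambda.$$
On the other hand, differentiating $\Psi_\lambda$ in $x_j$ by Leibniz and inserting the second relation for the resulting interior integral yields a second expression for $\partial_{x_j}\Psi_\lambda$. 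Equating the two and multiplying by $i\lambda$, the terms carrying $p_\lambda$ cancel, as do the terms $e^{i\lambda b}\Phi_+\,\partial_{x_j}b$ and $e^{i\lambda a}\Phi_-\,\partial_{x_j}a$, leaving the clean identity
$$e^{i\lambda b(x')}\,\partial_{x_j}\Phi_+(x')=e^{i\lambda a(x')}\,\partial_{x_j}\Phi_-(x'),\qquad j=2,\dots,n,$$
valid for all small $\lambda$.

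Now I would view this as an identity between two analytic functions of $\lambda$. Where the slice is nondegenerate, $a(x')\ne b(x')$, the exponentials $e^{i\lambda b}$ and $e^{i\lambda a}$ are linearly independent, so the identity forces $\partial_{x_j}\Phi_+(x')=\partial_{x_j}\Phi_-(x')=0$ for every $j\ge 2$. Since these $x_j$-derivatives span the tangential gradient of $\Phi$ along each sheet (each sheet being a graph over $x'$), the tangential gradient of $\Phi$ vanishes on the dense open subset of $\partial M$ lying over the regular transversal points. By continuity of $\nabla^{\partial M}\Phi$ it then vanishes on all of $\partial M$, and connectedness yields that $\Phi=\phi|_{\partial M}$ is constant.

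The main obstacle is not the algebra but justifying it at the available regularity and for the actual geometry. The integration by parts and the differentiation of $\Psi_\lambda$ under the integral sign require the slice endpoints $a(x'),b(x')$ to be at least Lipschitz in $x'$ and the slices to be intervals; away from the generic situation (when $M$ is not $x_1$-convex, so that a slice is a union of several intervals, each contributing its own pair of sheets, and at the fold $\partial\pi(M)$ where $a=b$) one must argue on the open dense set of regular transversal points and pass to the limit using $\phi\in C^{1,1}(M)$, or refine the slicing accordingly. One should also confirm that the relations hold for a whole interval of $\lambda$ rather than discrete values, so that the $\lambda$-analyticity argument applies; this is ensured because $p_\lambda$ is produced by Proposition~\ref{ray transform} uniformly for all $|\lambda|$ below a fixed threshold.
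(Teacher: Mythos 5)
Your computation is correct as far as it goes: on the set of regular values of the projection $\pi|_{\partial M}$, integration by parts and Leibniz do produce the identity $e^{i\lambda b(x')}\,\partial_{x_j}\Phi_+(x')=e^{i\lambda a(x')}\,\partial_{x_j}\Phi_-(x')$, and linear independence in $\lambda$ of the two exponentials then kills the tangential gradient of $\phi$ on the graph-like sheets of $\partial M$. The fatal step is the density claim. The set of boundary points lying over regular transversal values of $\pi$ need \emph{not} be dense in $\partial M$: an admissible manifold satisfying every hypothesis of Theorem~\ref{main th3} can have an open portion of $\partial M$ tangent to the $x_1$-direction. Concretely, take $M=[0,1]\times M_0'$ with $M_0'$ a simple submanifold of $M_0$ and the corners smoothed; then $\pi(M)=M_0'$ is simple, $\partial M$ is connected, but the lateral band $[0,1]\times \partial M_0'$ is an open subset of $\partial M$ lying over $\partial\pi(M)$, and no point of it is a slice endpoint $(a(x'),x')$ or $(b(x'),x')$ for an interior $x'$. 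Your argument says nothing about $\phi$ there: it gives that $\phi$ is constant on each of the two caps, but continuity of $\nabla^{\partial M}\phi$ cannot propagate vanishing across a set with nonempty interior, so you can conclude neither that the two constants agree nor that $\phi$ is constant on the band. (In general the uncontrolled set is the part of $\partial M$ over the critical values of $\pi|_{\partial M}$; Sard's theorem makes its \emph{projection} null, not the set itself.)

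This is precisely the difficulty the paper's proof is built to avoid, and it is why the paper does not simply reuse the relations $f=-\lambda p$, $\alpha=-i\,dp$. Instead it constructs further complex geometric optics solutions: one family with \emph{arbitrary} holomorphic amplitudes $a_0$ (any $a_0$ with $\overline{\partial}a_0=0$, not just $e^{i\lambda(x_1+ir)}$), and a second family with the conjugate phase $x_1-ir$ and antiholomorphic amplitudes. These yield $\int_{\partial M_\theta}\phi\,a_0\,d\rho=0$ and $\int_{\partial M_\theta}\overline{\phi}\,a_0\,d\rho=0$ for every holomorphic $a_0$ on the two-dimensional slice $M_\theta$, identities which see the \emph{entire} boundary curve $\partial M_\theta$, vertical parts included; the argument of \cite[Section~5]{DKSU2007} then forces $\Re\phi$ and $\Im\phi$ to be constant on each connected component of $\partial M_\theta$, and varying $\theta$ together with connectedness of $\partial M$ finishes the proof. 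If you wished to salvage your route you would need a separate argument on the vertical portions --- over $\partial\pi(M)$, where $p_\lambda=0$, expanding your first relation in powers of $\lambda$ does show $\phi$ is constant along each vertical segment --- but over interior critical values $p_\lambda$ need not vanish and the endpoint functions degenerate, so this repair does not cover the general case, whereas the paper's complex-analytic argument does.
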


\begin{proof}

Let us start by constructing more complex geometric optics solutions. From Proposition~\ref{existence of CGOs}, Remark~\ref{remark 1} and Remark~\ref{remark 2} we can choose complex geometric optics solutions of the form
\begin{align*} 
u&=e^{-\frac{1}{h}(x_1+ir)}(|g|^{-1/4}c^{1/2}a_0(x_1,r,\theta)b(\theta)+hr_1),\\
v&=e^{\frac{1}{h}(x_1+ir)}(|g|^{-1/4}c^{1/2}+hr_2),
\end{align*}
where $\bar{\partial}a_0=0$, $\lambda\in\R$ and $\|r_j\|_{H^1_{\rm scl}(M_1)}=\mathcal O(1)$, $j=1,2$. Note that in the previous construction we choose $a_0=e^{i\lambda (x_1 + ir)}$ but this time we need more $a_0$'s. Substituting these solutions in \eqref{main int identity}, multiplying the resulting equality by $h$ and letting $h\to 0$, we get
$$
\lim_{h\to 0}\int_{M}\<X,\nabla \rho\>_g\,uv\,d\Vol_g(x)=0,
$$
where $\rho=x_1+ir$ and $X=X_1-X_2$. Recall that $X:=\nabla\phi$. Insert the above complex geometric optics solutions yields
$$
\int_M \bar{\partial}\phi a_0(x_1,r,\theta)b(\theta)\,dr\,d\theta\,dx_1=0.
$$
Varying the smooth function $b(\theta)$ leads to
$$\int_{M_\theta} \bar{\partial}\phi a_0 d\bar{\rho}\wedge d\rho=0\quad\text{for all}\quad\theta\in S^{n-2}.$$
Integrating by parts and using that $\bar{\partial} a_0=0$ gives
\begin{equation} \label{dbareqn}
\int_{\partial M_\theta} \phi a_0 d\rho=0\quad\text{for all}\quad\theta\in S^{n-2}
\end{equation}
and for every $a_0$ with $\bar{\partial} a_0=0$.

On the other hand, noticing that in solving the eikonal equation \eqref{eikonal}, we may choose $\varphi=x_1$ but $\psi=-r$. Then we can construct complex geometric optics solutions of the form
\begin{align*} 
u&=e^{-\frac{1}{h}(x_1-ir)}(|g|^{-1/4}c^{1/2}a_0(x_1,r,\theta)b(\theta)+hr_1),\\
v&=e^{\frac{1}{h}(x_1-ir)}(|g|^{-1/4}c^{1/2}+hr_2),
\end{align*}
where $\partial a_0=0$, $\lambda\in\R$ and $\|r_j\|_{H^1_{\rm scl}(M_1)}=\mathcal O(1)$, $j=1,2$. Here 
$$\partial=\frac{1}{2} \left( \frac{\partial}{\partial x_1} - i \frac{\partial}{\partial r}\right).$$
Using a similar argument as in the preceding paragraph we can derive
$$\int_{\partial M_\theta} \phi \tilde{a}_0 d\bar{\rho}=0\quad\text{for all}\quad\theta\in S^{n-2}$$
and for every $\tilde{a}_0$ with $\partial \tilde{a}_0=0$. In particular we can choose $\tilde{a}_0=\bar{a}_0$ where $a_0$ solves $\bar{\partial} a_0=0$. Then taking complex conjugate gives
\begin{equation} \label{deqn}
\int_{\partial M_\theta} \bar{\phi} a_0 d\rho=0\quad\text{for all}\quad\theta\in S^{n-2}.
\end{equation}
Combining \eqref{dbareqn} and \eqref{deqn} we see
$$
\int_{\partial M_\theta} \Re \phi \,a_0 d\rho=0, \quad\quad \int_{\partial M_\theta} \Im \phi \,a_0 d\rho=0
$$
for all $a_0$ with $\bar{\partial} a_0=0$. Using the argument in \cite[Section 5]{DKSU2007} implies that $\Re\phi|_{\partial M_\theta}=F|_{\partial M_\theta}$ for some non-vanishing holomorphic function $F$ on $M_\theta$. Observing that $\Im F$ is a harmonic function in $M_\theta$ and $\Im F|_{\partial M_\theta}=0$, we conclude that $F$ is real-valued and hence is constant on each connected component of $\partial M_\theta$. Varying $\theta$ shows that $\Re \phi$ is constant along $\partial M$. Likewise we can show $\Im \phi$ is also constant along $\partial M$.
\end{proof}

\begin{proof}[Proof of Theorem \ref{main th3}]
Since $\phi=c$ for some constant $c$ along $\partial M$, replacing $\phi$ by $\phi-c$ if necessary, we may assume $\phi=0$ on $\partial M$. The rest part of the proof is the same as that of Theorem \ref{main th}.
\end{proof}

\bigskip\bigskip

\textbf{Acknowledgement:} The authors would like to thank Prof. Katya Krupchyk for her suggestions on an earlier version of this paper. The authors are also deeply grateful to Prof. Gunther Uhlmann for his generous support relating to this project.

\end{document}